\numberwithin{equation}{section}
\newtheorem{theorem}{Theorem}[section]
\newtheorem{proposition}[theorem]{Proposition}
\newtheorem{lemma}[theorem]{Lemma}
\newtheorem{corollary}[theorem]{Corollary}
\newcommand{\secref}[1]{Section~\ref{#1}\xspace}
\newcommand{\thmref}[1]{Theorem~\ref{#1}\xspace}
\newcommand{\lemref}[1]{Lemma~\ref{#1}\xspace}
\newcommand{\propref}[1]{Proposition~\ref{#1}\xspace}
\newcommand{\comment}[1]{}
\newcommand{\R}{{\mathbb{R}}}
\newcommand{\E}{{\mathbf{E}}}
\newcommand{\Var}{{\mathbf{Var}}}
\newcommand{\Cov}{{\mathbf{Cov}}}
\newcommand{\Prob}{{\mathbf{Pr}}}
\newcommand{\Bin}{{\textup{Bin}}}
\newcommand{\Text}[1]{\enskip\text{#1}\enskip}
\newcommand{\cei}[1]{{\left\lceil{#1}\right\rceil}}
\newcommand{\mc}[1]{\mathcal #1}
\newcommand{\G}[2]{\I G_{#1,#2}}
\newcommand{\res}[1]{\I Z/#1\I Z}
\newcommand{\hide}[1]{}
\newcommand{\I}[1]{{\mathbb #1}}
\newcommand{\Ov}[1]{\overline{#1}}
\newcommand{\eps}{\varepsilon}
\newcommand{\aut}{\mathrm{Aut}}
\newcommand{\shatten}[3]{\|#2-#1\|_{C_{#3}}}
\newcommand{\rt}{\right}
\newcommand{\lt}{\left}
\newcommand{\e}{\epsilon}
\title{How unproportional must a graph be?}
\author{Humberto Naves\fnref{fn1}}
\ead{hnaves@ima.umn.edu}
\address{Institute for Mathematics and its Applications,
	University of Minnesota, Minneapolis, MN 55455, USA.}
\author{Oleg Pikhurko\fnref{fn2}}
\ead{O.Pikhurko@warwick.ac.uk}
\address{Mathematics Institute and DIMAP,
		University of Warwick, Coventry CV4 7AL, UK.}
	\author{Alex Scott}
	\ead{scott@maths.ox.ac.uk}
	\address{Mathematical Institute, University of Oxford, Andrew Wiles Building, Radcliffe Observatory
			Quarter, Woodstock Road, Oxford OX2 6GG, UK.}
\begin{document}

\begin{abstract}
	
	Let $u_k(G,p)$ be the maximum over all $k$-vertex graphs $F$
	of by how much the number of induced copies of $F$ in $G$ differs
	from its expectation in the binomial random graph with
	the same number of vertices as $G$ and with edge probability $p$.
	This may be viewed as a measure of how close $G$ is to being
	$p$-quasirandom. For a positive integer $n$ and $0<p<1$, let
	$D(n,p)$ be the distance from $p\binom{n}{2}$ to the nearest
	integer. Our main result is that, for fixed $k\ge 4$
	and for $n$ large, the minimum of $u_k(G,p)$ over $n$-vertex
	graphs has order of magnitude $\Theta\big(\max\{D(n,p), p(1-p)\}
	n^{k-2}\big)$ provided that $p(1-p)n^{1/2} \to \infty$.
	
\end{abstract}

\maketitle

\section{Introduction}
\label{sec:intro}

An important result of Erd\H{o}s and Spencer~\cite{ErdosSpencer71}
states that every graph $G$ of order $n$ contains a set $S\subseteq
V(G)$ such that $e(G[S])$, the number of edges in the subgraph induced by
$S$, differs from $\frac12{|S|\choose 2}$ by at least $\Omega(n^{3/2})$; an
earlier observation of Erd\H{o}s~\cite{Erdos63a} shows that this lower
bound is tight up to the constant.
More generally, it was shown in \cite{ErdosGoldbergPachSpencer88}
that for graphs with density $p\in(\frac{2}{n-1},1-\frac{2}{n-1})$,
there is some subset where the number of edges differs from expectation
by at least $c\sqrt{p(1-p)}n^{3/2}$
(see \cite{BollobasScott06,BollobasScott11,BollobasScott15} for further results and discussion).

When $p$ is constant, the above results can be equivalently reformulated
in the language of graph limits as that the smallest cut-distance from
the constant-$p$ graphon to an order-$n$ graph $G$ is $\Theta(n^{-1/2})$.
Instead of defining all terms here
(which can be found in Lov\'asz' book~\cite{Lovasz:lngl}), we observe that
the cut-distance in this special case is equal, within some multiplicative
constant, to the maximum over $S\subseteq V(G)$ of $\frac1{n^2}\,
\Big|2e(G[S])-p|S|^2\Big|$.

There are other measures of how close a graph $G$ is to the constant-$p$
graphon, which means measuring how close $G$ is to being $p$-quasirandom.
Here we consider two possibilities, subgraph statistics and graph
norms, as follows. 

For graphs $G$ and $H$, we denote by $N(H, G)$ the number of induced
subgraphs of $G$ that are isomorphic to $H$. For example, if $v(H)=k\le n$,
then the expected number of $H$-subgraphs in the binomial random
graph $\G np$ (where each pair on the vertex set $[n]:=\{1,\dots,n\}$ is independently included as an edge with probability $p$) is
\[
  \E[N(H,\G np )]= \frac{n(n-1)\dots (n-k+1)}{|\aut(H)|}\,
  p^{e(H)}(1-p)^{{k\choose2}-e(H)},
\]
where $\aut(H)$ is the group of automorphisms of $H$.

Let $k\ge 2$ be a fixed integer parameter. For any graph $G$ on $n$
vertices and a real $0<p<1$, let
\begin{equation}
  \label{eq:u_k_g}
  u_k(G,p) := \max \Big\{\,\big|\,N(F, G) - \E[N(F, \G np )]\,\big|\, : v(F) = k\Big\},
\end{equation}
where the maximum is taken over all (non-isomorphic) graphs $F$ on $k$
vertices. The quantity $u_k(G,p)$ measures how far the graph
$G$ is away from the random graph $\G np $ in terms of $k$-vertex induced
subgraph counts. For example, $u_k(G,p)/n^k$ is within a constant factor (that depends on $k$
only) from the total variational distance between $\G kp$ and a random
$k$-vertex subgraph of $G$.

We are interested in estimating
\begin{equation}
  \label{eq:u_k_n}
  u_k(n,p) := \min\{u_k(G,p): v(G)=n\},
\end{equation}
 the minimum value of $u_k(G,p)$ that a graph $G$ of order $n$ can have. 
Informally speaking, we ask how $p$-quasirandom a graph of order $n$ can be.

Clearly, $u_2(n,p) < 1$ and $u_2(n,p) = 0$ if $p\binom{n}{2}$ is integer.
In fact, if we denote by $D(n,p)$ the distance from $p\binom{n}{2}$ to
the nearest integer, then $u_2(n,p) = D(n,p)$. The problem of constructing pairs
$(F,p)$ with $u_3(F,p)=0$ (such graphs $F$ were called \emph{$p$-proportional})
received some attention because the Central Limit Theorem fails for the random
variable $N(F,\G np )$ for such $F$, see~\cite{BarbourKaronskiRucinski89,Janson90flt,JansonNowicki91}. Apart from sporadic examples,
infinitely many such pairs were constructed by Janson and
Kratochvil~\cite{JansonKratochvil91} for $p=1/2$ and by Janson and
Spencer~\cite{JansonSpencer92} for every fixed
rational $p$; see K\"arrman~\cite{Karrman93} for a different proof
of the last result.

The main contribution of this paper is the following.
\begin{theorem}

  \label{thm:main}
  (a)  Let $k\ge 3$ be fixed and $p=p(n)\in(0,1)$ with $\frac{1}{p(1-p)} =
  o(n^{1/2})$. Then
  \[
    u_k(n,p) = O\big(\max\{D(n,p),\, p(1-p)\}n^{k-2}\big).
  \]

  (b)  Let $k\ge 4$ be fixed and $p=p(n)\in(0,1)$. Then
  \[
   u_k(n,p) = \Omega\big(\max\{D(n,p),\, p(1-p)\}n^{k-2}\big).
  \]

\end{theorem}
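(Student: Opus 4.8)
It is convenient to pass from induced $k$-vertex counts to counts of arbitrary graphs on at most $k$ vertices, using two elementary facts. (i) By inclusion--exclusion between induced and non-induced copies (equivalently, the $p$-biased Fourier expansion of $G\mapsto N(F,G)$), the quantity $N(F,G)-\E[N(F,\G np)]$ is a linear combination --- with coefficients depending only on $k$ and $p$ and on binomials $\binom{n-j}{k-j}$ --- of ``discrepancy sums'' $\sum_{A}\prod_{e\in A}(x_e-p)$ over families $A$ of vertex pairs of a fixed isomorphism type with at most $\binom k2$ edges, and each such sum is, up to lower order, of the shape $\pm\big(N(H,G)-\E[N(H,\G np)]\big)$ with $v(H)\le k$; conversely, for fixed $H$ with $v(H)=j$ the double count $\binom{n-j}{k-j}N(H,G)=\sum_{v(F)=k}c_{H,F}N(F,G)$ holds with constants $c_{H,F}\in\N$, $\sum_F c_{H,F}=O_k(1)$, so control of the $k$-vertex counts passes to all smaller counts (at the smaller scale $n^{j-2}$). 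Thus $u_k(G,p)$ is small exactly when $|N(H,G)-\E[N(H,\G np)]|=O\big(\max\{D(n,p),p(1-p)\}\,n^{v(H)-2}\big)$ for all $H$ with $v(H)\le k$. (ii) Since $\sum_F N(F,G)=\binom nk=\sum_F\E[N(F,\G np)]$, we have $\sum_F\big(N(F,G)-\E[N(F,\G np)]\big)=0$, so any lower bound for $\max_F|N(F,G)-\E[N(F,\G np)]|$ gives the same bound, up to an $O_k(1)$ factor, for $u_k(G,p)$.

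\textbf{Lower bound (b).} The $D(n,p)n^{k-2}$ term is purely an integrality phenomenon: summing $e(G[S])$ over all $k$-sets $S$ gives $\binom{n-2}{k-2}e(G)=\sum_F e(F)N(F,G)$, whence $\big|\sum_F e(F)(N(F,G)-\E[N(F,\G np)])\big|=\binom{n-2}{k-2}\,|e(G)-p\binom n2|\ge\binom{n-2}{k-2}D(n,p)$; as $\sum_F e(F)=O_k(1)$, some $F$ is off by $\Omega(D(n,p)n^{k-2})$, and by the Setup $u_k(G,p)=\Omega(D(n,p)n^{k-2})$ (valid for all $k\ge2$). For the $p(1-p)n^{k-2}$ term, the averaging identities of the Setup reduce everything to $k=4$, so suppose for contradiction that $u_4(G,p)<c\,p(1-p)n^2$ for a small absolute constant $c$. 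Then every $4$-vertex count of $G$ is within $O(c\,p(1-p)n^2)$ of its $\G np$-value, every $\le3$-vertex count within $O(c\,p(1-p)n)$, and $|e(G)-p\binom n2|=O(c\,p(1-p))$. Counting unordered pairs of vertex-disjoint edges of $G$ in two ways yields the exact identity
\[
  \sum_{v(F)=4}\mathrm{pm}(F)\,N(F,G)\;=\;\binom{e(G)}{2}\;-\;\sum_{v\in V(G)}\binom{d_G(v)}{2},
\]
where $\mathrm{pm}(F)$ is the number of perfect matchings of $F$; the left side is an $\N$-combination of $4$-vertex counts, and the cherry sum on the right equals the fixed combination $N(P_3,G)+3N(K_3,G)$ of $3$-vertex counts. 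Subtract the $\G np$-expectation of this identity from the identity itself: the left side and the cherry term each change by $O(c\,p(1-p)n^2)$, while, with $\delta_e:=e(G)-p\binom n2$,
\[
  \binom{e(G)}{2}-\E\Big[\tbinom{e(\G np)}{2}\Big]
  =\Big(p\tbinom n2\delta_e+\tbinom{\delta_e}{2}\Big)-\tfrac12\Var\!\big(e(\G np)\big),
\]
and here $|\delta_e|=O(c\,p(1-p))$ and $\Var(e(\G np))=\binom n2 p(1-p)$, so the right-hand change is $-\tfrac12\binom n2 p(1-p)+O(c\,p(1-p)n^2)$ (the $\tbinom{\delta_e}2=O(1)$ error is negligible against $p(1-p)n^2$ whenever $p(1-p)\gg n^{-2}$, and the remaining range of $p$ is covered by the $D(n,p)$ bound). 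Equating the two sides forces $\binom n2 p(1-p)=O(c\,p(1-p)n^2)$, false once $c$ is a small enough constant. Hence $u_4(G,p)=\Omega(p(1-p)n^2)$, and combining the two bounds proves (b).

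\textbf{Upper bound (a): plan and main obstacle.} By the Setup it suffices, for every large $n$ and every $p$ with $p(1-p)n^{1/2}\to\infty$, to produce one graph $G$ on $n$ vertices with $N(H,G)$ within $O(\max\{D(n,p),p(1-p)\}\,n^{v(H)-2})$ of $\E[N(H,\G np)]$ for all $H$ with $v(H)\le k$ --- i.e.\ attaining exactly the obstructions found in (b). I would build $G$ from a pseudorandom (or random) graph with $e(G)=\mathrm{round}(p\binom n2)$ and then correct the finitely many relevant statistics one ``level'' at a time --- first the degree sequence and cherry/codegree data, then the triangle count, then the $4$-vertex statistics, and so on --- each correction using local rewirings chosen so that the statistic being corrected moves in the required direction while those corrected at earlier stages are left essentially fixed; the $2$-swap $ab,cd\mapsto ac,bd$, which preserves all degrees, is the natural building block at the lowest levels. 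An alternative is to take $\G np$ conditioned on all its $\le k$-vertex statistics equalling prescribed nearest-admissible integer values, and to show via a multivariate local central limit theorem that this event has positive probability, with the fractional parts of the prescribed values producing the $\max\{D(n,p),p(1-p)\}$ factor. In either approach, the hypothesis $p(1-p)n^{1/2}\to\infty$ is what provides enough room: enough edges to rewire, and enough spread in each statistic relative to the perturbation a single correction induces. I expect this construction --- and in particular the bookkeeping that keeps the errors propagated between correction rounds (equivalently, the explicit error term in the local limit theorem) below the target $n^{v(H)-2}$ scale, uniformly over the admissible range of $p$ --- to be the main difficulty of the paper.
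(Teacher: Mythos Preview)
Your lower bound for (b) is correct and follows the paper's route: reduce to $k=4$ via the double count $\binom{n-j}{k-j}N(H,G)=\sum_{v(F)=k} c_{H,F}N(F,G)$ (the paper's step-up Lemma~2.1), then use that $e(G)^2$ is an integer combination of $\le 4$-vertex counts while $\Var[e(\G np)]=p(1-p)\binom n2$ forces a deviation of that size. Your disjoint-edges identity is a repackaging of the paper's $e(G)^2=\sum_F\alpha_F N(F,G)$. One small patch: your appeal to the $D(n,p)$ bound in the regime $p(1-p)=O(n^{-2})$ does not work as stated, since $D(n,p)$ can vanish there; the paper sidesteps this by working with $e(G)^2$ rather than $\binom{e(G)}{2}$, which makes the error term $O(\eps\, p^2(1-p)n^2)$ --- the extra factor of $p$ renders it uniformly small against $p(1-p)n^2$.

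For (a), your plan heads the right way (start from $\G np$ and correct), but you overestimate what must be corrected. The key observation you are missing is that for every $F$ with $v(F)\ge 4$ and no isolated vertices one has $e(F)\ge 2$, hence with high probability $|S(F,\G np)|=O\big(p^{e(F)/2}n^{v(F)/2}\big)=O\big(p\,n^{v(F)-2}\big)$ already; so \emph{only} the three statistics $S(K_2,G)$, $S(P_2,G)$, $S(K_3,G)$ require adjustment, and there is no cascade through $4$-vertex and higher levels. The paper's correction mechanism is also not your degree-preserving $2$-swap: it removes or adds $O(p^{1/2}n)$ single edges to set $|S(K_2,G)|=D(n,p)$, then performs $O(p^{1/2}n)$ paired moves (delete an edge, insert a non-edge) that fix $S(K_2,G)$ while steering $\big(S(P_2,G),S(K_3,G)\big)$ down to $O(pn)$. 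The device that makes this possible is to pre-classify pairs $ij$ by the sign pattern of $\big(S_{ij}(P_2,G),\,S_{ij}(K_3,G)\big)$ into four ``quadrant'' classes of edges, plus a fifth class of non-edges where both derivatives are small; the hypothesis $p(1-p)n^{1/2}\to\infty$ enters exactly here, in showing (via a bivariate local limit theorem for the codegree counts) that every class has positive density.
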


Note that the existence of proportional graphs shows that the lower bound of Theorem~\ref{thm:main} does  not
extend in general to $k=3$.

Another measure of graph similarity is the $2k$-th Shatten norm $\shatten pG{2k}$. Lemma~8.12
in~\cite{Lovasz:lngl} shows that the $4$-th Shatten norm defines the same
topology as the cut-norm. Again, we define it only for the special case when
we want to measure how $p$-quasirandom an $n$-vertex graph  $G$ is, where we allow loops. Here, we take the (normalised) $\ell_{2k}$-norm of the eigenvalues $\lambda_1,\dots,\lambda_n$ of
$M=A-pJ$, where $A$ is the adjacency matrix of $G$ and $J$ is the all-$1$
matrix:
$$
\shatten pG{2k}:=\frac{(\lambda_1^{2k}+\dots+\lambda_n^{2k})^{1/2k}}n.
$$ 
 We remark that when $G$ has a loop, the corresponding diagonal
entry in the matrix $A$ is~1. An equivalent and more combinatorial
definition of the $2k$-th Shatten norm is to take $\shatten pG{2k}=
t(C_{2k},M)^{1/2k}$, where $C_{2k}$ is the $2k$-cycle and $t(F,M)$
denotes the \emph{homomorphism density} of a graph $F$, which is the
expected value of $\prod_{ij\in E(F)} M_{f(i),f(j)}$, where
$f:V(F)\to [n]$ is a uniformly chosen random function, see
\cite[Chapter 5]{Lovasz:lngl}. In other words,
\begin{equation}
  \label{eq:shatten}
  \shatten pG{2k}= \lt(n^{-2k}\sum_{f:\res{2k}\to [n]}\
    \prod_{i\in\res{2k}} (A_{f(i),f(i+1)}-p)\rt)^{1/2k},
\end{equation}
where the sum is over all $n^{2k}$ maps
$f:\res{2k}\to [n]$, from the integer residues modulo $2k$ to~$\{1,\dots,n\}$.

We can show the following result.

\begin{theorem}

  \label{th:shatten} 
  Let $k\ge 2$ be a fixed integer. The minimum of $\shatten pG{2k}$
  over all $n$-vertex graphs $G$ with loops allowed is
  \[
    \Theta\lt(\min\lt\{p(1-p),\, p^{1/2}(1-p)^{1/2} n^{-(k-1)/2k}\rt\}\rt).
  \]

\end{theorem}

Hatami~\cite{Hatami10} studied which graphs other than even cycles produce
a norm when we use the appropriate analogue of \eqref{eq:shatten}. He showed,
among other things, that complete bipartite graphs with both parts of even
size do. We also prove a version of Theorem~\ref{th:shatten} for this
norm, see Theorem~\ref{th:2k2m} of \secref{sec:shatten}.

The rest of this paper is organised as follows. In \secref{sec:lower_bound}
we prove the lower bound from \thmref{thm:main}. In \secref{sec:upper_bound}
we prove the upper bound. We consider graph norms in
\secref{sec:shatten}, in particular proving Theorem~\ref{th:shatten} there.
The final section contains some open questions and concluding remarks.
Throughout the paper, we adopt the convention that $k$ is a fixed
constant and all asymptotic notation symbols ($\Omega$, $O$, $o$ and $\Theta$)
are with respect to the variable $n$. To simplify the presentation, we often omit
floor and ceiling signs whenever these are not crucial and make no attempts
to optimise the absolute constants involved.

\section{Lower bound for $u_k(n,p)$ in the range $k \ge 4$}
\label{sec:lower_bound}

The goal of this section is to prove that $u_k(n,p) =
\Omega\big(\max\{D(n,p),\, p(1-p)\}n^{k-2}\big)$. 
More precisely, we will show that there exists a constant $\eps=
\eps(k)>0$ such that $u_k(G,p) \ge \eps\max\{D(n,p),\,p(1-p)\} n^{k-2}$,
for all graphs $G$ on $n\ge k$ vertices and for all $0<p<1$. The
following lemma shows that it is enough to prove the lower bound for
$k=4$ only.

\begin{lemma}

  \label{lem:step_up}
  For every $k\ge 2$ there is $c_k>0$ such that
  $u_{k+1}(G,p)\ge c_kn \cdot u_k(G,p)$ for every graph $G$ of order
  $n\ge k+1$ and for all $0<p<1$.

\end{lemma}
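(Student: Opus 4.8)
The plan is to relate $k$-vertex induced subgraph counts in $G$ to $(k+1)$-vertex ones by a standard "averaging up" identity. Let $F$ be a $k$-vertex graph achieving $u_k(G,p) = N(F,G) - \E[N(F,\G np)]$. For a $(k+1)$-vertex graph $F'$, every induced copy of $F'$ in $G$ contains, by deleting one vertex, some number of induced copies of $F$; conversely, every induced copy of $F$ in $G$ extends to an induced copy of some $(k+1)$-vertex graph in exactly $n-k$ ways (choosing the extra vertex). Summing over all $(k+1)$-vertex graphs $F'$ and counting pairs (induced copy of $F$, extension vertex) gives
\[
  (n-k)\, N(F,G) \;=\; \sum_{F':\,v(F')=k+1} a_{F,F'}\, N(F',G),
\]
where $a_{F,F'}$ is the number of $k$-subsets of $V(F')$ inducing a copy of $F$; this is a nonnegative integer bounded by $k+1$. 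The same identity holds in expectation over $\G np$ (both sides are linear in the subgraph-count functional and the identity is purely combinatorial), so subtracting yields
\[
  (n-k)\,\big(N(F,G)-\E[N(F,\G np)]\big) \;=\; \sum_{F'} a_{F,F'}\,\big(N(F',G)-\E[N(F',\G np)]\big).
\]
The left-hand side equals $(n-k)\,u_k(G,p)$.

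Now I would bound the right-hand side. The number of isomorphism types $F'$ on $k+1$ vertices is a constant $t = t(k+1)$, and each coefficient $a_{F,F'}$ is at most $k+1$. Hence the right-hand side is at most $(k+1)\,t \cdot \max_{F'}\big(N(F',G)-\E[N(F',\G np)]\big) \le (k+1)\,t\cdot u_{k+1}(G,p)$, provided the maximizing term is nonnegative — and it is, since at least one term must have the same sign as the left-hand side, which is nonnegative (as $u_k(G,p) = N(F,G)-\E[N(F,\G np)] \ge 0$ by definition, the maximum over all $k$-vertex graphs being at least the value for any particular one, and in fact $u_k(G,p)\ge 0$ always since the counts sum to $\binom nk$ on both sides so some difference is $\ge 0$). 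Therefore
\[
  (n-k)\,u_k(G,p) \;\le\; (k+1)\,t\cdot u_{k+1}(G,p),
\]
which gives $u_{k+1}(G,p) \ge \frac{n-k}{(k+1)\,t}\,u_k(G,p) \ge c_k\, n\, u_k(G,p)$ for a suitable constant $c_k>0$ and all $n\ge k+1$ (adjusting $c_k$ to absorb the $n-k$ versus $n$ discrepancy, using $n-k\ge n/(k+1)$ say for $n$ large, and handling small $n$ trivially).

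The only genuinely delicate point is the sign issue: one must be sure that the right-hand side, a signed sum, can be lower-bounded by a single positive term times $u_{k+1}$. This works because the left side is a fixed nonnegative quantity $(n-k)u_k(G,p)$, so not all summands on the right can be negative; picking a nonnegative summand and bounding its count-difference by $u_{k+1}(G,p)$ suffices. I would also double-check the combinatorial identity by a clean double-counting argument (pairs consisting of a $k$-set $S$ with $G[S]\cong F$ together with a vertex $v\notin S$), which makes both the $G$-version and the $\G np$-expectation version transparent. Everything else — counting isomorphism types, bounding coefficients — is routine.
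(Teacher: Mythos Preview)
Your proof is correct and follows essentially the same approach as the paper: both double-count pairs (induced copy of $F$, extra vertex) to obtain the identity $(n-k)N(F,G)=\sum_{F'} N(F,F')\,N(F',G)$, subtract the corresponding expectation identity, and extract a $(k+1)$-vertex graph $F'$ with large deviation. If anything, you are slightly more careful than the paper about the sign issue (the paper passes through the absolute-value quantity $u_F(G,p)$ without explicitly noting why this matches the signed maximum defining $u_{k+1}$), and your observation that $n-k\ge n/(k+1)$ for all $n\ge k+1$ makes the ``small $n$'' caveat unnecessary.
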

\begin{proof}
Define 
 $$u_F(G,p):=\big|N(F,G)-\E[N(F,\G np )]\big|.
 $$ Take a graph $F$ of
order $k$ with $u_F(G,p)= u_{k}(G,p)$. Let $f(G)$ be the number of
pairs $(A,x)$ where a $k$-set $A$ induces $F$ in $G$ and $x\in V(G)\setminus A$.
Then $f(G)= (n-k)N(F,G)$ and $\E[f(\G np )]=(n-k)\E[N(F,\G np )]$;
thus these two parameters differ (in absolute value) by exactly
$(n-k)u_k(G,p)$. On the other hand, $f(G)$ can be written as
$\sum_{F'} N(F,F')N(F',G)$ where the sum is over non-isomorphic $(k+1)$-vertex
graphs $F'$. The expectation of $f(\G np )$ obeys the same
linear identity: 
\[
 \E[f(\G np )]=\sum_{v(F')=k+1} N(F,F')\,\E[N(F',\G np)].
\]
 We conclude that
 \begin{eqnarray*}
 \frac{n}{k+1}\, u_k(G,p)&\le& (n-k)\,u_k(G,p)\ =\ \big|\,f(G)-\E[f(\G np )]\,\big|\\ 
 &\le& \sum_{v(F')=k+1} N(F,F')\,
  u_{F'}(G,p)\ \le\ 2^{{k+1\choose 2}}\cdot (k+1)\cdot u_{k+1}(G,p).
 \end{eqnarray*}%
 Thus the lemma holds with $c_k=2^{-{k+1\choose 2}}\, (k+1)^{-2}$.
\end{proof}

In the next lemma we prove one of the bounds for $u_4(n,p)$. We remark
that it was implicitly proven in \cite[Proposition~3.7]{JansonKratochvil91}.

\begin{lemma}

  \label{lem:k_equals_4}
  There exists an absolute constant $\eps>0$ such that,
  for every $0<p<1$ and for all graphs $G$ on $n\ge 4$
  vertices, the inequality $u_4(G,p) \ge \eps p(1-p) n^2$ holds.

\end{lemma}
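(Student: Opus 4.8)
The plan is to show that, among the $\binom{k}{2}+1=7$ isomorphism types of graph $F$ on $4$ vertices, at least one must have $u_F(G,p)=|N(F,G)-\E[N(F,\G np)]|$ of order $p(1-p)n^2$, and to do this by a clever choice of a \emph{signed combination} of the counts $N(F,\cdot)$ rather than by tracking a single $F$. Concretely, I would introduce, for each pair of vertices, the centered indicator $x_{uv}:=\mathbf 1[uv\in E(G)]-p$, and observe that any linear combination $\sum_F \lambda_F N(F,G)$ over $4$-vertex $F$ can be rewritten, after expanding each induced-subgraph count into a sum over injective $4$-tuples, as $\sum_{\{u,v,w,z\}} P(x_{uv},x_{uw},\dots,x_{wz})$ for a fixed symmetric polynomial $P$ in the six variables attached to the pairs inside the $4$-set; the coefficients $\lambda_F$ give us complete freedom over which multilinear monomials in the $x$'s appear. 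The expectation $\E[N(F,\G np)]$ is obtained by the same expansion with each $x_{uv}$ replaced by an independent mean-zero variable, so $\sum_F\lambda_F u_F$ is bounded below by $|\sum_{\{u,v,w,z\}} (P(x)-\text{its constant term})|$ — that is, by the absolute value of a sum over $4$-subsets of a multilinear polynomial in the $x_{uv}$ with \emph{no constant term}, every monomial of degree $\ge 1$.

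The key step is then to pick $P$ so that this sum is provably large. The natural choice is to make $P$ capture the ``edge-density deviation squared'' at the level of $4$-sets: take the symmetrization of $x_{uv}x_{uw}$, i.e. sum over the $4$-set of products $x_{ab}x_{ac}$ over cherries (paths on two edges with a common center) inside the $4$-set. Summing this over all $4$-subsets of $V(G)$ gives, up to a positive constant and lower-order terms, $\sum_{a}\bigl(\sum_{b\ne a} x_{ab}\bigr)^2 = \sum_a (d(a)-p(n-1))^2$, the total squared deviation of the degrees. If this quantity is $\ge \eps p(1-p)n^3$ we are immediately done (some $u_F$ is $\Omega(p(1-p)n^2)$, dividing by the bounded number of types and bounded $\lambda_F$). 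So the remaining case is that \emph{all} degrees are within $O((p(1-p)n)^{1/2})$ of $p(n-1)$; in that regime I would bring in a second polynomial, the symmetrization of $x_{uv}x_{wz}$ over the three perfect matchings of the $4$-set, whose sum over $4$-subsets is $\bigl(\sum_{uv} x_{uv}\bigr)^2$ minus diagonal corrections, i.e. controlled by the global edge deviation $2e(G)-p n^2$; and a third one of degree three or four forcing a genuinely quadratic lower bound on $\sum x_{uv}^2 = \sum_{uv} (\text{something})$ that cannot be cancelled. Here one uses that $\sum_{\{u,v,w,z\}} x_{uv}^2 x_{wz}^2$ type sums are, after accounting for $x_{uv}^2 = (1-p) \mathbf 1[uv\in E] + p\,\mathbf 1[uv\notin E]$-style identities, bounded below by a fixed positive multiple of $p(1-p)n^4$, independently of $G$, because $\sum_{uv} x_{uv}^2 \ge $ (number of pairs)$\cdot \min\{(1-p)^2,p^2\}$ is too weak but $\E_{uv}[x_{uv}^2]$ is pinned by $\sum_{uv}\mathbf 1[uv\in E]$ which is itself controlled in the current case.

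The main obstacle, and where I expect to spend the real effort, is the last case analysis: once the degrees and the total edge count are both nearly their ``random'' values, one must exhibit a \emph{specific} degree-$\le 4$ multilinear identity with no constant term whose sum over $4$-sets is bounded below by $\Omega(p(1-p)n^2)$ using \emph{only} these near-equalities plus the trivial but crucial fact that each $x_{uv}\in\{-p,\,1-p\}$, so $x_{uv}^2-(1-2p)x_{uv}-p(1-p)=0$ identically. That quadratic relation is the lever: it lets us replace $\sum x_{uv}^2$ by $p(1-p)\binom n2$ plus a linear term in the $x_{uv}$'s that the edge-count hypothesis controls, producing an \emph{unavoidable} contribution of size $\Theta(p(1-p)n^2)$ at the level of the appropriate $4$-vertex count (this is exactly the mechanism behind the cited \cite[Proposition~3.7]{janson+kratochvil:91}). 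I would finish by checking that the finitely many coefficients $\lambda_F$ used across the two or three cases are absolute constants, so that $\max_F u_F(G,p) \ge \tfrac{1}{\text{const}}\,|\sum_F \lambda_F u_F| \ge \eps\, p(1-p) n^2$ with an absolute $\eps>0$, as required.
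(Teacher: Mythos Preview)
Your core idea is the same as the paper's: the ``unavoidable'' contribution of size $p(1-p)\binom n2$ coming from the identity $x_{uv}^2=(1-2p)x_{uv}+p(1-p)$ is exactly the variance $\Var[e(\G np)]=p(1-p)\binom n2$, and that is indeed the lever. But the paper packages this far more cleanly and avoids your case analysis entirely. It simply writes $e(G)^2=\sum_{2\le v(F)\le 4}\alpha_F N(F,G)$ (the square of an edge count is a bounded linear combination of $N(F,\cdot)$'s with $v(F)\le 4$), takes expectations on both sides in $\G np$, and observes that
\[
\Big|e(G)^2-\E[e(\G np)^2]\Big|\ \ge\ \Var[e(\G np)]\ -\ \Big|e(G)^2-\E[e(\G np)]^2\Big|.
\]
If $u_4(G,p)$ were small, then by \lemref{lem:step_up} so would $u_2(G,p)=|e(G)-p\binom n2|$, forcing the subtracted term to be $o(p(1-p)n^2)$; hence $\sum_F\alpha_F\,u_F(G,p)\ge \frac12 p(1-p)\binom n2$, and step-up again gives $u_4\ge \eps p(1-p)n^2$. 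No degree-deviation case, no separate treatment of cherries versus matchings, no ``remaining case'' --- the whole argument is one inequality plus two applications of the step-up lemma. Your detour through $\sum_a(d(a)-p(n-1))^2$ is correct but unnecessary: it handles a case that the $e(G)^2$ identity already covers automatically, because the cherry and matching terms both appear in the expansion of $e(G)^2$ with positive coefficients and do not need to be separated. (Minor: there are $11$, not $\binom{4}{2}+1=7$, isomorphism types on four vertices.)
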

\begin{proof}
Let $\eps>0$ be a sufficiently small constant.
Suppose that there
is a graph $G$ of order $n\ge 4$ satisfying $u_4(G,p)<\eps p(1-p) n^2$.
By applying \lemref{lem:step_up} twice, we conclude that $u_2(G,p)
<\eps_1 p (1-p)$, where we set $\eps_1:=\eps/(c_2c_3)$ with the constants $c_i$ given by the lemma.
This implies that
 \begin{eqnarray}
  \bigg| e(G)^2 - \E\lt[e(\G np )\rt]^2\bigg|
  &\le&  \big| e(G) - \E\lt[e(\G np )\rt]\big|\cdot \lt(2p\binom{n}{2}+
     \eps_1 p(1-p)\rt)\nonumber\\
  &<& \eps_1 p (1-p)\cdot 3p\binom{n}{2}\ =\ 3\eps_1 p^2(1-p)\,\binom{n}{2}.\label{eq:eG2}
\end{eqnarray}%
\comment{OP: Indeed, we need to show that $ \frac{\eps p(1-p)}{c_2c_3}< p{n\choose 2}$, which is equivalent to $\eps (1-p)< {n\choose 2}c_2c_3$, which holds if eg $\eps<6c_2c_3$}

For every graph $G$, we can write $e(G)^2$ as
\begin{equation}
  \label{eqn:square_edges}
  e(G)^2 = \sum_{2\le v(F)\le 4} \alpha_F N(F,G),
\end{equation}
where  $F$ in the summation ranges over non-isomorphic graphs satisfying $2\le v(F)\le 4$,
and $\alpha_F\ge 0$ is a constant depending on $F$ only. Indeed, split ordered pairs $(e,e')\in E(G)^2$ according to the isomorphism type $F$ of $G[e\cup e']$. The number $\alpha_F$ of times that a given  $F$-subgraph  in $G$ is counted equals the number of ways to pick an ordered pair of edges from $E(F)$ whose union is the whole vertex set~$V(F)$. For example, if $F$
is an edge then $\alpha_F=1$, and if $v(F)=4$ then $\alpha_F$ is the
number of ordered pairs of disjoint edges in $F$.

Since $\E\lt[e(\G np )^2\rt] - \E\lt[e(\G np )\rt]^2 = \Var[e(\G np )]=
p(1-p){n\choose 2}$ is the variance of $e(\G np)$, we have by~\eqref{eq:eG2} and the Triangle Inequality that
\begin{equation}
  \label{eqn:lb_sq_edges}
  \bigg|e(G)^2 - \E\lt[e(\G np )^2\rt]\bigg|>p(1-p){n\choose 2}-3\eps_1 p^2(1-p)\,\binom{n}{2}>
\frac{p(1-p)}{2}\binom{n}{2}.
\end{equation}%
\comment{OP: here we need $\frac{3\eps p^2(1-p)}{c_2c_3}\binom{n}{2}<\frac{p(1-p)}{2}\binom{n}{2}$ for which
$3\eps<c_2c_3/2$ suffices}
Moreover, the identity \eqref{eqn:square_edges} implies that
$\E\lt[e(\G np )^2\rt] = \sum_{2\le v(F)\le4} \alpha_F\, \E[N(F,\G np )]$.
Thus, by~\eqref{eqn:lb_sq_edges},
\begin{eqnarray*}
 \sum_{k=2}^4\sum_{v(F)=k} \alpha_F \,u_k(G,p) &\ge&
 \sum_{k=2}^4 \sum_{v(F)=k} \alpha_F \Big|N(F, G)-\E[N(F,\G np )]\,\Big|\\
 &\ge& \bigg|\sum_{k=2}^4 \sum_{v(F)=k} \alpha_F \Big( N(F, G)-\E[N(F,\G np )]\,\Big)\bigg|\\
 &=& \bigg|e(G)^2 - \E\lt[e(\G np )^2\rt]\bigg|\ >\ \frac{p(1-p)}{2}\binom{n}{2}.
\end{eqnarray*}
Thus for some $k\in\{2,3,4\}$, we have $u_k(G,p)\ge\eps p(1-p)n^2$. \lemref{lem:step_up}
implies that $u_4(G,p)>\eps p(1-p) n^2$, contradicting our
assumption and proving the lemma.
\end{proof}
The previous two lemmas give that $u_k(n,p) = \Omega(p(1-p)
n^{k-2})$ for $k\ge 4$. Thus, in order to finish the proof of the lower bound,
we need to show that $u_k(n,p) = \Omega(D(n,p)n^{k-2})$.
The latter bound is a consequence of $u_2(n,p) = D(n,p)$
together with \lemref{lem:step_up}, thereby concluding the
proof of Theorem~\ref{thm:main}(b).

\section{Upper bound for $k\ge 3$}
\label{sec:upper_bound}

In this section, we prove that $u_k(n,p) = O(\max\{D(n,p), p(1-p)\}
n^{k-2})$ for fixed $k\ge 3$ and for all $p=p(n)$ such that $\frac{1}{p(1-p)}=
o(n^{1/2})$. We can assume, without loss of
generality, that $p \le \frac12$. Indeed, if $\Ov{G}$ denotes the
complement of $G$ then $u_k(G,p) = u_k(\Ov{G}, 1-p)$, which implies
that $u_k(n,p)=u_k(n,1-p)$. Thus our assumption can be made because
the bound $O(\max\{D(n,p), p(1-p)\}n^{k-2})$ is symmetric with
respect to $p$ and $1-p$. (Recall that $D(n,p) = u_2(n,p) = u_2(n,1-p)
= D(n,1-p)$.)
In addition, note that in the range $p\le \frac12$, it suffices
to show that $u_k(n,p)=O(\max\{D(n,p), p\} n^{k-2})$.

To prove the upper bound, we borrow some definitions, results, and
proof ideas from~\cite{JansonSpencer92}. Following their notation,
one can count the number of induced subgraphs of $G$ that are isomorphic
to $H$ using the following identity
\begin{equation}
  \label{eqn:num_iso}
  N(H,G) = \sum_{H'} \prod_{e\in E(H')} I_G(e)
    \prod_{e\in E(\overline{H'})} (1-I_G(e)),
\end{equation}
where we sum over all $H'$ isomorphic to $H$ with $V(H') \subseteq V(G)$,
$I_G(e)$ is the indicator function that $e$ is an edge in $G$ and $\overline{H'}$
denotes the complement of the graph $H'$. Observe that the range of $H'$ taken in the outermost sum in~\eqref{eqn:num_iso} depends on $V(G)$ but not on $E(G)$; this will be useful when comparing $H$-counts in different graphs on the same vertex set. We define a related sum over the same range of
$H'$:
\begin{equation}
  \label{eqn:signed_sum}
  S(H,G) = S^{(p)}(H,G):= \sum_{H'} \prod_{e\in E(H')} (I_G(e) - p),
\end{equation}
where $p$ is as before. Rewriting~\eqref{eqn:num_iso} by replacing each factor $I_G(e)$ by
$(I_G(e) - p) + p$  and each factor $1-I_G(e)$ by $(1-p)-(I_G(e)-p)$  and expanding,
we obtain a linear combination of products $\prod_{e\in X} (I_G(e)-p)$, with each $X$ being some subset of unordered pairs of $V(G)$ involving at most $v(H)$ different vertices. All sets $X$ that are isomorphic to the same graph $F$ get the same coefficient, which we denote $a_{F,H}(n,p)$. The coefficient for $X=\emptyset$ (i.e.\ the constant term) is obtained by summing the same quantity $p^{e(H')}(1-p)^{e(\overline{H'})}$ over all summands $H'$; thus it is equal to the expected number of $H$-subgraphs in $\G np$. We separate this special term and re-write~\eqref{eqn:num_iso} as
\begin{equation}
  \label{eqn:id_iso_signed}
  N(H,G) = \E[N(H,\G np)]+ \sum_{F \in \mc{F}_k} a_{F,H}(n,p) S(F,G),
\end{equation}
where $k=v(H)$ and $\mc{F}_k$ denotes the family of all graphs $F$ without
isolated vertices satisfying $2\le v(F)\le k$. Also, note that $a_{F,H}(n,p)$ does not depend on $G$ and is bounded from above by $O(n^{v(H) - v(F)})$. In fact, one can show that $a_{F, H}(n,p) = O(p^{e(H)-\alpha}
n^{v(H) - v(F)})$, where $\alpha$ is the maximum number of edges that a common
subgraph of both $H$ and $F$ can have, but we will not need such an
estimate. 

Thus, in order to prove that there exists a graph $G$ on $n$ vertices such that
$u_k(G,p) = O(\max\{D(n,p), p\}n^{k-2})$, it suffices to show that there exists $G$
such that
\begin{equation}
  \label{eqn:small_signed}
  S(F,G) = \lt\{\begin{array}{ll}
    O(pn^{v(F)-2}), & \text{for all } F\in \mc{F}_k\setminus \{K_2\}, \\
    O(D(n,p)), & \text{if } F = K_2.
  \end{array}\rt.
\end{equation}
(Note that one cannot hope for $S(K_2,G)=O(p)$ in general; this is why we  need two terms in the asymptotic formula for $u_k(n,p)$.)
A natural candidate for $G$ in
\eqref{eqn:small_signed} is the random graph $G\sim \G np$. Unfortunately, $G$ does not work ``out of the box''; namely,~\eqref{eqn:small_signed} typically fails for $F\in\mc{F}_k$ with $v(F)\le 3$. However, by changing the adjacencies of carefully chosen pairs we can steer these parameters to have the desired order of magnitude.

The next lemma yields some bounds for $S(F,\G np )$.

\begin{lemma}

  \label{lem:small_signed_random}
  Let $G\sim \G np $. For all $F\in \mc{F}_k$, we have
  \[
    \E[S(F,G)]=0\quad\text{and}\quad
    \E[S(F,G)^2] \le p^{e(F)}n^{v(F)}.
  \]

\end{lemma}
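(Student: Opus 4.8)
The plan is to expand $S(F,G)^2$ as a double sum and use linearity of expectation, exploiting that the random variables $I_G(e)-p$ are independent with mean zero. Write $S(F,G)=\sum_{F'}\prod_{e\in E(F')}(I_G(e)-p)$, where $F'$ ranges over the (labelled) copies of $F$ with vertex set contained in $V(G)=[n]$. Since each edge slot is an independent centered random variable with $\E[I_G(e)-p]=0$, every individual term has expectation zero, giving $\E[S(F,G)]=0$ immediately (here we use that $F$ has no isolated vertices, so $E(F')\ne\emptyset$, though in fact even the empty product case would not arise in $\mc F_k$).

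For the second moment, expand $\E[S(F,G)^2]=\sum_{F',F''}\E\big[\prod_{e\in E(F')}(I_G(e)-p)\prod_{e\in E(F'')}(I_G(e)-p)\big]$, the sum over ordered pairs of labelled copies $F',F''$ of $F$ in $[n]$. By independence of distinct edge-indicators, the expectation of a given term factorizes over the edge set $E(F')\cup E(F'')$: an edge lying in exactly one of $E(F'),E(F'')$ contributes a factor $\E[I_G(e)-p]=0$, so the whole term vanishes unless $E(F')=E(F'')$, i.e.\ $F'$ and $F''$ have the same edge set. When $E(F')=E(F'')$, each edge contributes $\E[(I_G(e)-p)^2]=p(1-p)\le p$, so the term equals $(p(1-p))^{e(F)}\le p^{e(F)}$. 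It remains to count the surviving pairs: these are pairs $(F',F'')$ with $E(F')=E(F'')$. Since $F$ has no isolated vertices, a labelled copy $F'$ is determined by its edge set together with the (at most $v(F)$) vertices it uses; once the common edge set is fixed, $F''$ is the \emph{same} labelled graph, so the number of surviving pairs is exactly the number of labelled copies of $F$ in $[n]$, which is at most $n^{v(F)}$. Combining, $\E[S(F,G)^2]\le n^{v(F)}\cdot p^{e(F)}$, as claimed.

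The only point requiring a little care — and the place I would be most careful in the write-up — is the counting step: one must observe that if two copies $F'$ and $F''$ of the \emph{same} isomorphism type have identical edge sets, and $F$ has no isolated vertices, then $F'$ and $F''$ coincide as labelled graphs (their vertex sets are forced to be the union of their edges' endpoints). This is what collapses the double sum over pairs to a single sum over copies and yields the clean bound $n^{v(F)}$ rather than something larger. Everything else is a direct application of independence and the elementary identities $\E[I_G(e)-p]=0$ and $\E[(I_G(e)-p)^2]=p(1-p)$.
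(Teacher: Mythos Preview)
Your proof is correct and follows essentially the same approach as the paper: expand $S(F,G)$ and $S(F,G)^2$, use independence and the zero mean of $I_G(e)-p$ to kill all cross terms, bound the diagonal terms by $p^{e(F)}$, and count the surviving copies by $n^{v(F)}$. The paper simply asserts that the expectation vanishes when $F'\ne F''$; your explicit observation that $E(F')=E(F'')$ forces $F'=F''$ because $F\in\mc F_k$ has no isolated vertices is exactly the justification behind that assertion, so you have made explicit a step the paper leaves implicit.
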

\begin{proof}
By \eqref{eqn:signed_sum}, we have
\[
  \E[S(F,G)]=\sum_{F'} \enskip
  \E\lt[ \prod_{e\in E(F')} (I_{G}(e)-p)\rt],
\]
where the sum is over all $F'$ isomorphic to $F$ with $V(F') \subseteq
V(G)$. Each expectation on the right-hand side vanishes, by independence
and since $\E[I_{G}(e)]=p$. Thus $\E[S(F,G)]=0$. 

We similarly write
\[
  \E[S(F,G)^2]= \sum_{F', F''} \enskip\E\lt[ \prod_{e\in E(F')}
  (I_{G}(e)-p) \prod_{e\in E(F'')}
  (I_{G}(e)-p)\rt].
\]
where the sum is over all pairs $(F',F'')$ of graphs isomorphic to $F$ with
$V(F') \cup V(F'') \subseteq V(G)$. The expectation term in the above sum
vanishes when $F' \ne F''$ and it is equal to $(p-p^2)^{e(F)}\le p^{e(F)}$
when $F'=F''$. Since the number of possible choices for $F'$ is at most
$\binom{n}{f}\cdot f!\le n^{f}$, where $f=v(F)$, we conclude that
$\E[S(F,G)^2] \le p^{e(F)} n^{v(F)}$.
\end{proof}

Using Chebyschev's inequality (see, e.g., \cite[Theorem 4.1.1]{AlonSpencer16pm}),
we have that, for all $\lambda>0$,
\begin{equation}
  \label{eqn:cheby_1}
  \Prob\lt[\,\big|S(F,\G np )\big| \ge \lambda\cdot
  p^{e(F)/2} n^{v(F)/2}\,\rt] \le \lambda^{-2}.
\end{equation}
By the union bound combined with \eqref{eqn:cheby_1}, the random graph
$G\sim \G np $ satisfies the following property  with probability at least
$0.96$.

\vspace{15pt}
\noindent\textbf{Property A.}\enspace $|S(F,G)| \le 5|\mc{F}_k|^{1/2}
p^{e(F)/2}n^{v(F)/2}$ for all graphs $F\in \mc{F}_k$.
\vspace{15pt}

The inequality $p^{e(F)/2}n^{v(F)/2} \le p n^{v(F)-2}$ holds whenever
$v(F)\ge 4$. This is because every graph on $4$ or more vertices in
$\mc{F}_k$ has at least $2$ edges, since no vertex is isolated. In order to find a
graph satisfying the conditions expressed in \eqref{eqn:small_signed}, we just
need to adjust $G$ so that $S(K_2,G) = O(D(n,p))$ and $S(F,G)=O(pn^{v(F)-2})$
when $F\in \mc{F}_3\setminus \{K_2\}$.
The family $\mc{F}_3\setminus \{K_2\}$ consists of two graphs: the triangle $K_3$ and the 2-path $P_2$, the unique graph on three vertices having exactly two edges.
So, we just need to adjust $S(K_2,G)$, $S(K_3,G)$ and $S(P_2,G)$. 
This must be performed carefully, to prevent $S(F,G)$ from
changing too much for graphs $F\in \mc{F}_k$ with $v(F)\ge 4$.

Let us investigate what happens to $S(F,G)$ when we add or remove an edge.
Note that by ``edges'', we generally mean edges in the complete graph, i.e.,
all pairs $ij$ with $i,j\in V(G)$, and not only the pairs that happen to
be selected as the edges of $G$. For each pair $ij$ with $i,j\in V(G)$, let
\begin{equation}
  \label{eqn:signed_sum_ij}
  S_{ij}(F,G) := S(F,G \cup \{ij\}) - S(F,G\setminus \{ij\}),
\end{equation}
where $G\cup \{ij\}$ and $G\setminus \{ij\}$ represent the graphs obtained
from $G$ by adding and removing the edge $ij$, respectively. 
By expanding each of the two terms in \eqref{eqn:signed_sum_ij} using~\eqref{eqn:signed_sum}, we can write $S_{ij}(F,G)$ as the sum  of
 $\prod_{e\in E(F')} (I_{G\cup \{ij\}}(e)-p)-\prod_{e\in E(F')} (I_{G\setminus \{ij\}}(e)-p)$ over all $F$-subgraphs $F'$ inside~$V(G)$. If $E(F')$ does not contain $ij$, then both products are identical. Thus we have that
 \begin{equation}\label{eq:SijFG}
  S_{ij}(F,G)=\sum_{F'} \big((1-p)-(-p)\big)\prod_{e\in E(F') \setminus \{ij\}} (I_{G}(e)-p)=\sum_{F'} \prod_{e\in E(F') \setminus \{ij\}} (I_{G}(e)-p),
 \end{equation}%
 where we sum over all $F'$ isomorphic to $F$ with $V(F') \subseteq
V(G)$ and $ij \in E(F')$.

The next
lemma gives a bound for the expectation and the variance of
$S_{ij}(F,\G np )$.

\begin{lemma}

  \label{lem:small_signed_random_ij}
  Let $G\sim \G np $. For all $F\in \mc{F}_k$ with $v(F)\ge 3$ and all pairs
  $1\le i < j \le n$, we have
  \[
    \E[S_{ij}(F,G)]=0\quad\text{and}\quad\E[S_{ij}(F,G)^2]
    \le k^2 p^{e(F) - 1} n^{v(F)-2}.
  \]

\end{lemma}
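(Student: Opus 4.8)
The plan is to mimic the structure of \lemref{lem:small_signed_random}, expanding $S_{ij}(F,G)$ as a signed sum over labelled copies of $F$ and exploiting independence of edge-indicators in $\G np$. First I would observe that in the sum \eqref{eqn:signed_sum} defining $S(F,G)$, the only terms that change when we toggle the pair $ij$ are those copies $F'$ of $F$ that \emph{use} the pair $ij$ as one of their edges; for such an $F'$, the product $\prod_{e\in E(F')}(I_G(e)-p)$ has the factor $(I_G(ij)-p)$, which becomes $(1-p)$ in $G\cup\{ij\}$ and $(-p)$ in $G\setminus\{ij\}$, so the difference contributes exactly $\prod_{e\in E(F')\setminus\{ij\}}(I_G(e)-p)$. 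Hence
\[
  S_{ij}(F,G)=\sum_{F'}\ \prod_{e\in E(F')\setminus\{ij\}}(I_G(e)-p),
\]
where the sum is over all copies $F'$ of $F$ with $V(F')\subseteq V(G)$ and with $ij\in E(F')$ mapped to a genuine edge of $F$.

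For the first moment, each summand is a product of independent mean-zero random variables $I_G(e)-p$ (note $ij$ is no longer among them, so there is at least one surviving factor exactly when $F$ has an edge other than the image of $ij$, which holds since $v(F)\ge 3$ and $F$ has no isolated vertices), so $\E[S_{ij}(F,G)]=0$. For the second moment I would square and expand as $\sum_{F',F''}\E[\cdots]$; by independence the cross term vanishes unless $E(F')\setminus\{ij\}=E(F'')\setminus\{ij\}$, i.e.\ $F'$ and $F''$ agree on all edges except possibly $ij$ — but since both are required to contain the pair $ij$ as an edge, this forces $F'=F''$. The surviving diagonal term is $\E\big[\prod_{e\in E(F')\setminus\{ij\}}(I_G(e)-p)^2\big]\le p^{e(F)-1}$, using $\E[(I_G(e)-p)^2]=p(1-p)\le p$ on each of the $e(F)-1$ factors. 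It remains to count the copies $F'$: choosing $F'$ amounts to choosing which edge (ordered pair) of $F$ is mapped onto $ij$ — at most $2e(F)\le k^2$ ways, since $e(F)\le\binom{k}{2}\le k^2/2$ — and then embedding the remaining $v(F)-2$ vertices of $F$ into $V(G)$, which is at most $n^{v(F)-2}$ ways. Multiplying gives $\E[S_{ij}(F,G)^2]\le k^2\, p^{e(F)-1} n^{v(F)-2}$, as claimed.

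The main thing to get right is the bookkeeping in the diagonal collapse: one must be careful that ``$F'$ and $F''$ have the same edge set outside $ij$'' genuinely forces $F'=F''$ rather than merely $F'=F''$ up to the choice of which pair plays the role of $ij$ — and this is exactly why the condition that $ij$ be an edge of both copies is essential. A secondary point is the counting bound $2e(F)\le k^2$, which is the crude estimate the lemma commits to; I would not try to optimize it. Everything else is a direct transcription of the argument in \lemref{lem:small_signed_random}, with ``all edges of $F'$'' replaced by ``all edges of $F'$ except the one mapped to $ij$'', so no new probabilistic input beyond Chebyshev-free second-moment expansion is needed.
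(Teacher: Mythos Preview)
Your proof is correct and follows essentially the same approach as the paper's own argument: expand $S_{ij}(F,G)$ as a sum over copies $F'$ of $F$ containing the pair $ij$, use independence to kill the first moment and the off-diagonal terms in the second moment, and bound the number of diagonal terms by $k^2 n^{v(F)-2}$. Your write-up is in fact slightly more careful than the paper's in two places --- you explain why the difference \eqref{eqn:signed_sum_ij} collapses to the claimed sum, and you note explicitly that $v(F)\ge 3$ guarantees at least one surviving mean-zero factor --- but these are elaborations, not deviations.
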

\begin{proof} The proof is similar to that of Lemma~\ref{lem:small_signed_random}.

We have $\E[S_{ij}(F,G)] = 0$ by~\eqref{eq:SijFG}, the independence of the random variables $I_G(e)$ and the linearity of expectation. 

For the second part of the lemma, we
write
\[
  \E[S_{ij}(F,G)^2]= \sum_{F', F''} \enskip\E\lt[
  \prod_{e\in E(F')\setminus \{ij\}} (I_{G}(e)-p)
  \prod_{e\in E(F'') \setminus \{ij\}} (I_{G}(e)-p)\rt].
\]
where the sum is over all pairs $(F',F'')$ of graphs isomorphic to $F$ with
$V(F') \cup V(F'') \subseteq V(G)$ and $\{i,j\} \in E(F') \cap E(F'')$.
The expectation term in the above sum vanishes when $F' \ne F''$ and it
is upper bounded by $p^{e(F) - 1}$ when $F'=F''$. Since the number of possible
choices for $F'$ is at most $k^2n^{v(F)-2}$, we conclude
that $\E[S_{ij}(F,G)^2] \le k^2p^{e(F) - 1}n^{v(F)-2}$, as desired.
\end{proof}

Take a pair $ij$ of vertices. For $0\le s\le 2$, let $Z_s=Z_s(ij)$  denote the number
of vertices $z\in V(G)\setminus \{i,j\}$ such that exactly $s$ of the pairs
$iz$ and $jz$ belong to $E(G)$. Let us express 
 \begin{eqnarray*}
  Y_1\ =\ Y_1(ij)&:=&S_{ij}(P_2,G),\\
  Y_2\ =\ Y_2(ij)&:=&S_{ij}(K_3,G),
  \end{eqnarray*} 
  in terms of the random variables~$Z_0$ and~$Z_2$.
 When we compute $Y_1$ using~\eqref{eq:SijFG}, we have to sum over all $2$-paths containing the edge $ij$. Denoting the third vertex of the path by $z$, we get 
  $$
 Y_1=\sum_{z\in V\setminus\{i,j\}} (I_G(iz)+I_G(jz)-2p)=2(1-p) Z_2 + (1-2p)Z_1 -2pZ_0.
 $$  
 Using 
 that $\E[Z_0]=(1-p)^2(n-2)$ and $\E[Z_2]=p^2(n-2)$ (or that $\E[Y_1]=0$), we derive that
 \begin{eqnarray}
  Y_1
  &=& 2(1-p) Z_2 + (1-2p)(n-2-Z_0-Z_2) -2pZ_0\nonumber\\
  &=&(Z_2-\E[Z_2])-(Z_0-\E[Z_0]).\label{eq:Y1}
  \end{eqnarray}
   Likewise, we obtain
  \begin{eqnarray}
  Y_2
  &=&\sum_{z\in V\setminus\{i,j\}} (I_G(iz)-p)(I_G(jz)-p)
  \ =\ (1-p)^2 Z_2 -p(1-p)Z_1+p^2 Z_0\nonumber\\
  &=&(1-p)(Z_2-\E[Z_2])+p(Z_0-\E[Z_0]).\label{eq:Y2}
  \end{eqnarray}


The triple $(Z_0, Z_1, Z_2)$ has a
multinomial distribution for $G\sim \G np $. 
In the next lemma we show that for any fixed rectangle
$R\subseteq \R^2$ of positive area, there exists $\eta =\eta(R)>0$ such that
$\lt(\frac{Y_1}{\sqrt{pn}}, \frac{Y_2}{p\sqrt{n}}\rt)\in R$
with probability at least $\eta$. Recall that we have assumed that $p\le 1/2$ and $p^2n\to\infty$.

\begin{lemma}

  \label{lem:limit_distr}
  For fixed reals $\alpha_1<\alpha_2$ and $\beta_1<\beta_2$ there exists
  $\eta=\eta(\alpha_1, \alpha_2, \beta_1, \beta_2) > 0$ such that, for all large $n$, the probability of
  \begin{equation}
    \label{eqn:cond_vars}
    \alpha_1\le\frac{Y_1}{\sqrt{pn}}\le\alpha_2\quad\text{and}\quad
    \beta_1\le\frac{Y_2}{p\sqrt{n}}\le\beta_2
  \end{equation}
  is at least $\eta$.
\end{lemma}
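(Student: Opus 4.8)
The plan is to realise $(Y_1,Y_2)$ as a normalised sum of $n-2$ i.i.d.\ two-dimensional vectors and to invoke a multivariate central limit theorem; the only genuine difficulty is that $p=p(n)$ may tend to $0$ or to $1$. Fix the pair $i\ne j$, and for $w\in V(G)\setminus\{i,j\}$ set $a_w=I_G(iw)$, $b_w=I_G(jw)$, so that the $a_w,b_w$ are $2(n-2)$ independent $\Bin(1,p)$ variables with $Z_2^*=\sum_w a_wb_w$ and $Z_0^*=\sum_w(1-a_w)(1-b_w)$. Substituting these into \eqref{eqn:Y1Y2} and simplifying gives the clean identities
\[
  Y_1=\sum_{w}\big((a_w-p)+(b_w-p)\big),\qquad
  Y_2=\sum_{w}(a_w-p)(b_w-p).
\]
Thus $(Y_1,Y_2)=\sum_w X_w$ with $X_w:=\big((a_w-p)+(b_w-p),\ (a_w-p)(b_w-p)\big)$ i.i.d.\ over $w$ and $\E X_w=0$. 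Using independence of $a_w$ and $b_w$ together with $\E[(a_w-p)^2]=p(1-p)$, a short computation shows that $X_w$ has the \emph{diagonal} covariance matrix $\mathrm{diag}\big(2p(1-p),\,p^2(1-p)^2\big)$; in particular $Y_1$ and $Y_2$ are uncorrelated, $\Var Y_1=2(n-2)p(1-p)$ and $\Var Y_2=(n-2)p^2(1-p)^2$.

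Next I would normalise: put $\hat X_w:=\big((a_w-p+b_w-p)/\sqrt{2p(1-p)},\ (a_w-p)(b_w-p)/(p(1-p))\big)$, so the $\hat X_w$ are i.i.d., mean $0$, with covariance the $2\times2$ identity. Since $|a_w-p|,|b_w-p|<1$, one has $\E|a_w-p+b_w-p|^3\le 2\,\E(a_w-p+b_w-p)^2=4p(1-p)$ and $\E|(a_w-p)(b_w-p)|^3\le\big(\E(a_w-p)^2\big)^2=p^2(1-p)^2$, so the third absolute moment satisfies $\rho:=\E\|\hat X_w\|_2^3=O\big(1/(p(1-p))\big)$. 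Hence the Lyapunov ratio of the sum $S_n:=(n-2)^{-1/2}\sum_w\hat X_w$ is $\rho/\sqrt{n-2}=O\big(1/(p(1-p)\sqrt n)\big)$, which tends to $0$ \emph{exactly} because $1/(p(1-p))=o(n^{1/2})$. Applying the one-dimensional Lyapunov CLT to $\langle\theta,\hat X_w\rangle$ for each fixed $\theta\in\R^2$ (legitimate since $\E|\langle\theta,\hat X_w\rangle|^3\le\|\theta\|^3\rho$ by Cauchy--Schwarz) and then the Cram\'er--Wold device, we get $S_n\Rightarrow N(0,I_2)$.

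Finally, undo the scaling: $Y_1/\sqrt{p(1-p)n}=\sqrt{2(n-2)/n}\,S_n^{(1)}$ and $Y_2/(p(1-p)\sqrt n)=\sqrt{(n-2)/n}\,S_n^{(2)}$ with prefactors tending to $\sqrt2$ and $1$, so the pair appearing in \eqref{eqn:cond_vars} converges in distribution to $N(0,\mathrm{diag}(2,1))$, a non-degenerate Gaussian. The closed rectangle $R=[\alpha_1,\alpha_2]\times[\beta_1,\beta_2]$ has boundary of Lebesgue, hence of $N(0,\mathrm{diag}(2,1))$-, measure zero, so by the portmanteau theorem the probability that \eqref{eqn:cond_vars} holds tends to the positive constant $c:=\int_{\alpha_1}^{\alpha_2}\!\int_{\beta_1}^{\beta_2}\tfrac{1}{2\sqrt2\,\pi}\,e^{-x^2/4-y^2/2}\,dy\,dx$; thus \eqref{eqn:cond_vars} holds with probability at least $\eps:=c/2>0$ once $n$ is large enough, which is the only regime in which the lemma is needed. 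The step I expect to be the crux is the moment/Lyapunov estimate of the middle paragraph: the bound $\rho=O(1/(p(1-p)))$ is matched precisely by the hypothesis on $p$, and this is exactly what keeps the CLT error $o(1)$ as $p\to0$ or $1$.
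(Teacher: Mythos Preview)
Your proof is correct. The identities $Y_1=\sum_w\big((a_w-p)+(b_w-p)\big)$ and $Y_2=\sum_w(a_w-p)(b_w-p)$ follow from \eqref{eqn:Y1Y2} exactly as you say, and the diagonal covariance calculation, the third-moment bound $\rho=O(1/(p(1-p)))$, and the use of Cram\'er--Wold plus the one-dimensional Lyapunov CLT for triangular arrays all check out. The hypothesis $1/(p(1-p))=o(n^{1/2})$ is used precisely where you identify it, to drive the Lyapunov ratio to zero, and the portmanteau step is unproblematic since the limiting covariance $\mathrm{diag}(2,1)$ is non-degenerate.

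Your route is genuinely different from the paper's. The paper does not write $(Y_1,Y_2)$ as an i.i.d.\ sum; instead it first passes from $(Y_1,Y_2)$ back to $(Z_0,Z_2)$ via the family of linear maps $T_p$, arguing that $\{T_p\}_{p\le1/2}$ is equicontinuous so a rectangle in the $(Y_1,Y_2)$-scale pulls back to a rectangle in the $(Z_0,Z_2)$-scale uniformly in $p$. It then samples $Z_2^*\sim\Bin(n-2,p^2)$ and applies the one-dimensional de Moivre--Laplace theorem, and conditionally on $Z_2^*$ samples $Z_0^*$ as another binomial, applying de Moivre--Laplace again; the hypothesis on $p$ is used to show that the conditional mean shift of $Z_0^*$ is negligible on the relevant scale. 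What the paper's argument buys is that it only ever invokes the classical one-dimensional local CLT for binomials, with no appeal to Cram\'er--Wold or to a multivariate statement. What your argument buys is directness: the clean sum-of-i.i.d.\ representation makes the diagonal covariance transparent and avoids the change-of-variables and the conditional sampling altogether.
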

\begin{proof}
\comment{Our former proof very sketchy...}%
Define 
 \begin{eqnarray*}
 c&:=&\frac12\, \min\{\,\alpha_2-\alpha_1,\,\beta_2-\beta_1\,\},\\
 C&:=& 2\max\big\{\,|\alpha_1|,|\alpha_2|,|\beta_1|,|\beta_2|\,\big\},\\
 \delta&:=& \frac{c}{8\pi}\, \mathrm{e}^{-2C^2}\ >\ 0.
 \end{eqnarray*}
 
Let us show that $\eta:=\delta^2$ works in the lemma. 
 Consider the following $2\times 2$-matrix and its inverse:
 $$
 A:=\left[\begin{array}{cc}-1 & \sqrt{p}\\
 \sqrt{p} & 1-p\end{array}\right]\quad\mbox{with}\quad A^{-1}=\left[\begin{array}{cc} -1+p& \sqrt{p}\\ \sqrt{p} & 1\end{array}\right].
 $$
  Note that each entry of $A$ and $A^{-1}$ has absolute value at most $1$, so the linear maps given by these matrices are 2-Lipschitz in the $\ell_1$-distance. Thus if we let $S=S(n)$ be the square of side length $c$ with centre $(\alpha_0,\beta_0)^T:=A^{-1}(\frac{\alpha_1+\alpha_2}2,\frac{\beta_1+\beta_2}2)^T$,
  then the image of $S$ under $A$ lies inside the rectangle $R:=[\alpha_1,\alpha_2]\times [\beta_1,\beta_2]$ while $S$ itself is a subset of $A^{-1}R\subseteq [-C,C]^2$. (Here $(\alpha,\beta)^T$ means the column vector with entries $(\alpha,\beta)$.)

The matrix $A$ was chosen to encode the linear relations~\eqref{eq:Y1} and~\eqref{eq:Y2} between $(Y_1,Y_2)$ and $(Z_0,Z_2)$, with an appropriate normalisation applied to each random variable. Specifically, it holds that
\begin{eqnarray}
   A\left(\frac{Z_0-\E[Z_0]}{\sqrt{pn}},\frac{Z_2-\E[Z_2]}{p \sqrt{n}}\right)^T
   &=& \lt(\frac{Y_1}{\sqrt{pn}},\, \frac{Y_2}{p\sqrt{n}}\rt)^T.
   \label{eq:ZY}
\end{eqnarray}

\comment{Calculation: LHS of last equality is
$$
\left(\frac{Z_2-\E[Z_2]}{p \sqrt{n}}\,\sqrt{p}-\frac{Z_0-\E[Z_0]}{\sqrt{pn}},\ \frac{(1-p)(Z_2-\E[Z_2])}{p \sqrt{n}}+\frac{Z_0-\E[Z_0]}{\sqrt{pn}}\,\sqrt{{p}}\right)
   $$}
 By~\eqref{eq:ZY} it is enough to show, that with probability at least $\eta$, we have
 \begin{eqnarray}
   \label{eqn:cond_Z0}
    \alpha_0-\frac c2\ \le \frac{Z_0-\E[Z_0]}{\sqrt{pn}}  &\le&
   \alpha_0+\frac c2,\\
    \label{eqn:cond_Z2}
    \beta_0-\frac c2 \ \le\  \frac{Z_2-\E[Z_2]}{p\sqrt{n}} &\le& \beta_0+\frac c2.
 \end{eqnarray}

A version of de Moivre-Laplace theorem (see e.g.~\cite[Theorem~1.6(i)]{Bollobas:rg}) states that, for any function $p=p(n)\in (0,1)$ with $p(1-p)n\to \infty$ and any reals $a<b$, if $X_n$ has the binomial distribution with parameters $(n,p)$, then
 \begin{equation}\label{eq:MoivreLaplace}
 \lim_{n\to\infty}\, \Prob\left[\, a\le \frac{X_n-np}{\sqrt{np(1-p)}}\le b\,\right] = \frac1{2\pi}\int_a^b \mathrm{e}^{-x^2/2}\mathrm{d}x.
 \end{equation}

Let $n$ be large. We begin by sampling $Z_2$. We know that $Z_2$ is distributed
according to the binomial distribution: $Z_2\sim \Bin(n-2,p^2)$. Its variance is $\Var[Z_2]=p^2(1-p^2)(n-2)$. Let $Z_2^*:=(Z_2-\E[Z_2])/\sqrt{\Var[Z_2]}$ be the normalised version of~$Z_2$. Note that the constraint \eqref{eqn:cond_Z2} is satisfied if and and only if $Z_2^*$ belongs to $\gamma_n\cdot [\beta_0-\frac c2,\beta_0+\frac c2]$, where $\gamma_n:=p\sqrt n/\sqrt{\Var[Z_2]}$ and $y\cdot X:=\{y\cdot x: x\in X\}$ denotes the dilation of a set $X$ by a scalar~$y$.
De Moivre-Laplace theorem~\eqref{eq:MoivreLaplace} applies to $Z_2$ since we assumed that $p^2n\to \infty$ and $p\le 1/2$. Using $p\le 1/2$ again, we have that $\gamma_n$ is between, for example, $1$ and $2$.
Note that the normal distribution assigns probability at least $2\delta$ to every interval of length $c$ inside $[-2C,2C]$ by the definition of~$\delta$. 

Let us show that the probability of~\eqref{eqn:cond_Z2} is at least $\delta$. If this is false, then by passing to a subsequence of counterexamples $n$ we can further assume that $\gamma_n$ and $\beta_0=\beta_0(n)$ converge  to some $\gamma$ and $\beta$ respectively (with $\gamma\in [1,2]$ and $|\beta|\le C -c/2$). Let $I=[a,b]$ be the interval with centre at $\frac{a+b}2=\gamma\beta$ such that de Moivre-Laplace theorem predicts the limiting probability $\frac32\, \delta$ for it. Its length $a-b$ is strictly smaller than $\gamma c$ because, as we have already observed, the probability that the normal variable hits $\gamma\cdot [\beta-\frac c2,\beta+\frac c2]$ is at least~$2\delta$. 
Thus, for all large $n$ from our subsequence, $I$ is a subset of $\gamma_n\cdot [\beta_0(n)-\frac {c}2,\,\beta_0(n)+\frac {c}2]$. However, our assumption states that each of the latter intervals is hit with probability less than $\delta$ by $Z_2^*$, contradicting de Moivre-Laplace theorem when applied to the constant interval~$I$.
\hide{A standard compactness argument based on the boundedness of $\beta_0=\beta_0(p)$ implies that the probability of \eqref{eqn:cond_Z2} is at least $\delta$ for all large~$n$.}

Let $\alpha\in\{0,\dots,n-2\}$ be such that $|\beta-\beta_0|\le c/2$, where we set $\beta:=(\alpha-(n-2)p^2)/(p\sqrt{n})$.
Let $X_\alpha$ be $Z_0$ conditioned on $Z_2=\alpha$.
The random variable $X_\alpha$ has the binomial distribution with parameters $(1-p^2)(n-2)
-\beta p \sqrt{n}$ and $\frac{(1-p)^2}{1-p^2}=\frac{1-p}{1+p}$. 
By our assumption $p^2 n\to\infty$, the term $\beta p \sqrt{n}=O(p\sqrt{n})$ is negligible when compared to $p^2n$. We have
 \begin{eqnarray*}
  \E[X_\alpha] &=& (1-p)^2(n-2) - \frac{1-p}{1+p}\cdot \beta p \sqrt{n},\\
  \Var[X_\alpha] &=& (1+o(1))\, \frac{1-p}{1+p} \cdot \frac{2p}{1+p}\cdot (1-p^2)n\ =\ (2+o(1)) \frac{p(1-p)^2n}{1+p}.
 \end{eqnarray*}
 We see that $\Var[X_\alpha]$ lies between, for example, $np/4$ and $4np$.
As before, a compactness argument based on de Moivre-Laplace theorem shows that the infimum over all intervals $I\subseteq [-2C,2C]$ of length $c/2$  of the probability that $(X_\alpha-\E[X_\alpha])/\sqrt{\Var[X_\alpha]}$ belongs to $I$ is at least $\delta$ for all large~$n$. 

We see that, when conditioned on any value $\alpha$ of $Z_2$ that satisfies~\eqref{eqn:cond_Z2},
the probability that~\eqref{eqn:cond_Z0} holds is at least~$\delta$. 
Therefore, the probability that~\eqref{eqn:cond_Z0} and~\eqref{eqn:cond_Z2} hold simultaneously is at least $\eta = \delta^2$, which concludes the proof.
\end{proof}

Next, we put a pair $e\subseteq V(G)$ in at most one of sets $E_1,\dots,E_5$ as follows:
\begin{align*}
E_1 &:= \{e: e\in E(G),\, \sqrt{pn}<Y_1(e) \Text{and} p\sqrt{n}<Y_2(e)\}, \\
E_2 &:=\{e: e\in E(G),\, \sqrt{pn}<Y_1(e) \Text{and} Y_2(e)<-p\sqrt{n}\}, \\
E_3 &:=\{e: e\in E(G),\, Y_1(e)<-\sqrt{pn} \Text{and}
p\sqrt{n}<Y_2(e)\}, \\
E_4 &:=\{e: e\in E(G),\, Y_1(e)<-\sqrt{pn} \Text{and}
Y_2(e)<-p\sqrt{n}\}, \\
E_5 &:=\{e: e\not\in E(G),\, |Y_1(e)|<0.1\sqrt{pn} \Text{and}
|Y_2(e)|<0.1p\sqrt{n}\}.
\end{align*}
 Also, let $E^{*}$ denote the set of pairs $ij$, where $i,j \in V(G)$ are distint vertices such that
 \begin{equation}
 \label{eqn:cheby_2}
 |S_{ij}(F, G)| > 4k\cdot \eps^{-1/2}|\mc{F}_k|^{1/2}
 p^{(e(F) - 1)/2}n^{v(F)/2-1}
 \end{equation}
 for at least one $F\in \mc{F}_k$. 

Informally speaking, the rest of the proof proceeds as follows. First, by using Lemma~\ref{lem:limit_distr} we show  that, with reasonably high probability, the set $E_i\setminus E^*$ is ``large'' for each $i\in [5]$. Then, by applying a simple greedy algorithm, Corollary~\ref{cor:large_matching} gives a bounded degree graph $H'$ consisting of $\Omega(n)$ edges from each $E_i\setminus E^*$. We will modify
the random graph $G$ to satisfy~\eqref{eqn:small_signed} by flipping some pairs, all restricted to~$H'$. First, by flipping the appropriate number of pairs inside either $E_1$ or $E_5$, we can make $|S(K_2,G)|$ to be equal to $D(n,p)$, the smallest possible value, thus
satisfying one of the constraints in~\eqref{eqn:small_signed}. Next, by adding an edge from $E_5$ to $E(G)$ and removing an edge
in $E_i$ from $E(G)$, we do not change $S(K_2,G)$ while we can steer each of $S(K_3,G)$ and $S(P_2,G)$ in the right direction by having the freedom to choose $i\in[4]$. The latter claim can be justified using the fact that all flipped pairs come from a bounded degree graph $H'$, so the updated values of $Y_1(e)$ and $Y_2(e)$ stay close to the initial values for every pair~$e\subseteq V(G)$. Furthermore, since $H'$ is disjoint from $E^*$, the effect on $S(F,G)$ of every $H'$-flip is small for each $F\in\mathcal{F}_k$. Thus we make~\eqref{eqn:small_signed} hold for $F\in\mathcal{F}_3$ without violating it for the graphs in $\mathcal{F}_k\setminus\mathcal{F}_3$.

Let us provide all the details. Let $\eps>0$ be sufficiently small, in particular so that $\eta=\eps$ satisfies Lemma~\ref{lem:limit_distr} for any choice of $\alpha_1<\alpha_2$ and $\beta_1<\beta_2$ from $\{\,\pm0.1,\,\pm1,\,\pm2\,\}$.

First, let us show that $|E_1|\ge \eps pn^2/4$ asymptotically almost surely. Recall that
$E_1$ consists of those pairs $e\subseteq V(G)$ for which
\begin{equation}
  \label{eq:class1}
  e \in E(G),\quad \sqrt{pn} < Y_1(e)\quad\text{and}\quad
  p\sqrt{n} < Y_2(e).
 \end{equation}
Let $I_1(e)$ be the indicator random variable for $E_1$.
For the random graph $G\sim \G np $, the first condition $e\in E(G)$ for
$e$ to be in $E_1$ is independent of the other two conditions. Thus, by the choice of $\eps$, we can
assume that $\E[I_1(e)] \ge \eps p$. We have $|E_1|=\sum_{e} I_1(e)$, hence
$\E[\,|E_1|\,] \ge \eps p \binom{n}{2}$. We re-write the variance of $|E_1|$
as the sum of pairwise covariances of its components: with $\Cov[X,Y]:=\E[XY]-\E[X]\,\E[Y]$ we have
\begin{equation}\label{eq:VarE1}
  \Var[\,|E_1|\,] = \sum_{e\cap e' =\emptyset} \Cov[I_1(e),I_1(e')]
  +\sum_{e\cap e'\ne \emptyset} \Cov[I_1(e), I_1(e')],
 \end{equation}

Take any pairs $e=xy$ and $e'=x'y'$ that have no common vertices. Let us show that $\Cov[I_1(e),I_1(e')] = o(p^2)$. Informally speaking, $I_1(e)$ can only influence
$I_1(e')$ through the four edges that connect $e$ to $e'$, while the probability that $Y_1$ or $Y_2$ is so close to the cut-off values in~\eqref{eq:class1} as to be affected by these four edges is $o(1)$ by de Moivre-Laplace theorem. A bit more formally, we first expose all edges between the set $A:=e\cup e'$ and its complement $V(G)\setminus A$, and compute the ``current'' values $Y_1'$ and $Y_2'$ on $e$ and $e'$ where, for example,  
 $$
 Y_1'(e):=\sum_{z\in V(G)\setminus A} (I_G(xz)+I_G(yz)-2p)
 $$ 
 takes into account those 2-paths on $V(G)$ that contain $e=xy$ as an edge but are vertex-disjoint from the other pair~$e'$. The values of $Y_1$ and $Y_2$ on $e$ and $e'$ can be computed from $Y_1'$ and $Y_2'$ by adding the contribution from the four edges connecting $e$ to $e'$. By~\eqref{eq:Y1} and~\eqref{eq:Y2}, each of these increments is at most $8$. If $Y_1'(e),Y_1'(e')\not\in \sqrt{pn}\pm8$ and $Y_2'(e),Y_2'(e')\not\in p\sqrt{n}\pm 8$, then the validity of the requirements on $Y_1$ and $Y_2$ in~\eqref{eq:class1} does not depend on the four edges between $e$ and $e'$;
thus the corresponding contribution to $\Cov[I_1(e),I_1(e')]$ is zero. The complementary event, that at least one of $Y_1'$ and $Y_2'$ is within additive constant 8 from the corresponding cut-off value, has probability $o(1)$ by an application of de Moivre-Laplace theorem. Furthermore,
the constraints $e,e'\in E(G)$ in~\eqref{eq:class1}, that are independent of everything else, contribute $O(p^2)$ to the covariance of $I_1(e)$ and $I_1(e')$. Thus indeed $\Cov[I_1(e),I_1(e')] = o(p^2)$. 

We see that the first sum in
\eqref{eq:VarE1} has $O(n^4)$ terms, each $o(p^2)$. Since the second sum
has $O(n^3)$ terms, each at most $p^2$, the variance of $|E_1|$ is
$o(n^4p^2)$. By Chebyschev's inequality,
\[
  \Prob[\,|E_1| < \eps p n^2 / 4\,]\ \le\ \Prob[\,|E_1-\E[E_1]| > \eps p n^2 / 5\,]\  =\ o(1),
\]
proving the required.

The argument above implies that asymptotically almost surely
$|E_i| \ge \eps p n^2/4$ for all $i=1,\ldots, 4$. Similarly, one
can show that $|E_5| \ge \eps n^2/4$ asymptotically almost
surely. (Note that $E_5$ might be much ``denser'' than the other sets because
we dropped the requirement $e\in E(G)$.) 
Finally, using the standard
Chernoff estimates one can show that asymptotically almost surely
$\Delta(G) \le 2np$ for $G\sim \G np$. In particular, the following
property is satisfied with probability at least $0.99$ when $n$ is
large.

\vspace{15pt}
\noindent \textbf{Property B.}\enspace $|E_i|\ge \eps p n^2/4$
for $i=1,\ldots, 4$. Moreover, $|E_5| \ge \eps n^2/4$
and $\Delta(G)\le 2p n$.
\vspace{15pt}

Next, we would like to show that the set $E^*$ that was defined by~\eqref{eqn:cheby_2} is small. 
Chebyschev's inequality together with
\lemref{lem:small_signed_random_ij} implies that $\Prob[ij\in E^{*}]
\le \eps/16$. Hence $\E[\,|E^{*}|\,] \le \eps n^2/32$. By
Markov's inequality, $\Prob[\,|E^{*}| > \eps n^2/8\,] < \frac{1}{4}$.
Similarly, $\Prob[\,|E^{*}\cap E(G)| > \eps p n^2/8\,] < \frac{1}{4}$.
Thus by the union bound, $G\sim \G np $ satisfies the following property with
probability at least $0.5$.

\vspace{15pt}
\noindent \textbf{Property C.}\enspace $E^{*}$ has size at most
$\eps n^2/8$. Moreover, $|E^{*}\cap E(G)| \le \eps p n^2/8$.
\vspace{15pt}

Also, we state and prove the following simple result that asserts the existence
of large matchings in relatively dense graphs.

\begin{proposition}

  \label{prop:large_matching}
  Let $H$ be a graph and let $\Delta:=\Delta(H)$. There exists a
  matching in $H$ of size at least $\frac{e(H)}{2\Delta}$. In
  particular, if $m < \Delta$ then $H$ contains a
  subgraph $H'$ with maximal degree $\Delta(H')\le m$
  and $e(H') \ge \frac{m}{4\Delta} e(H)$.

\end{proposition}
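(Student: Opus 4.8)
The plan is to prove both assertions by a greedy extraction of maximal matchings, using only the elementary fact that an inclusion-maximal matching in a graph of maximum degree $\Delta$ must cover a constant fraction of the edges.

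For the first assertion, take any inclusion-maximal matching $M$ in $H$, and let $V(M)$ be the set of $2|M|$ vertices it covers. Every edge of $H$ has an endpoint in $V(M)$, since otherwise it could be added to $M$. Hence each edge of $H$ is counted at least once in $\sum_{v\in V(M)}\deg_H(v)$, so $e(H)\le \sum_{v\in V(M)}\deg_H(v)\le 2|M|\Delta$, which rearranges to $|M|\ge e(H)/(2\Delta)$.

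For the ``in particular'' part, I would iterate this. Set $H_0:=H$, and for $i=1,\dots,m$ let $M_i$ be an inclusion-maximal matching of $H_{i-1}$ and put $H_i:=H_{i-1}\setminus M_i$; thus $M_1,\dots,M_m$ are pairwise edge-disjoint matchings. Let $H':=M_1\cup\dots\cup M_m$. Since each vertex lies in at most one edge of each $M_i$, we have $\Delta(H')\le m$. It remains to bound $e(H')=\sum_{i=1}^m|M_i|$ from below. Write $s_i:=\sum_{j\le i}|M_j|=e(H)-e(H_i)$ and $s_0:=0$. Each $H_{i-1}$ is a subgraph of $H$, so it has maximum degree at most $\Delta$, and the first part applied to $H_{i-1}$ gives $|M_i|\ge e(H_{i-1})/(2\Delta)=(e(H)-s_{i-1})/(2\Delta)$.

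Finally I would split into two cases according to whether $s_{m-1}\ge e(H)/2$. If so, then since the sequence $(s_i)$ is non-decreasing and $m<\Delta$ (so $\frac{m}{4\Delta}<\frac12$), we get $e(H')=s_m\ge s_{m-1}\ge e(H)/2> \frac{m}{4\Delta}e(H)$. Otherwise $s_{i-1}<e(H)/2$ for every $1\le i\le m$, and then $|M_i|\ge (e(H)-s_{i-1})/(2\Delta)> e(H)/(4\Delta)$ for each $i$, so $e(H')=\sum_{i=1}^m|M_i|>\frac{m}{4\Delta}e(H)$. The degenerate situations (namely $e(H)=0$, or some $H_{i-1}$ being empty with $i\le m$, which would force the contradiction $\Delta(H)\le m<\Delta$) do not cause trouble. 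I do not expect a genuine obstacle here; the only point requiring a moment's care is the observation that a union of $m$ pairwise edge-disjoint matchings has maximum degree at most $m$, after which everything is bookkeeping.
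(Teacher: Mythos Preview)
Your argument is correct and follows essentially the same approach as the paper: the first assertion is proved identically via a maximal matching, and the second by iteratively stripping off $m$ edge-disjoint matchings. The only cosmetic difference is that the paper truncates each extracted matching to exactly $\lceil e(H)/(4\Delta)\rceil$ edges (so the invariant $e(H\setminus H')>e(H)/2$ is maintained throughout), whereas you take full maximal matchings and recover the same bound via the two-case split on $s_{m-1}$; both amount to the same bookkeeping.
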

\begin{proof}
Let $M$ be a maximal matching in $H$, and assume $M$ has $k<
\frac{e(H)}{2\Delta}$ pairs. All the edges of $H$ have at least one
endpoint in $V(M)$. Hence
\[
  e(H) \le |V(M)|\cdot \Delta = 2k\cdot \Delta < e(H),
\]
a contradiction. We remark that the bound $\frac{e(H)}{2\Delta}$ is not tight
but it suffices for our purposes.

To construct $H'$, we start with the empty graph.
At each step of the construction, we apply the first assertion of the
proposition to the graph $H\setminus H'$, in order to obtain a matching
$M$ having exactly $\cei{\frac{e(H)}{4 \Delta}}$ edges. We then add all the
edges from $M$ to $H'$. We repeat this step exactly $m$ times.
Since we always have $e(H') \le m \cdot \cei{\frac{e(H)}{4\Delta}}
< \frac{e(H)}{2}$, and thus $e(H\setminus H') > \frac{e(H)}{2}$,
it is always possible to find such $M$, in all the steps of the process.
\end{proof}

An important corollary of \propref{prop:large_matching} is as follows.

\begin{corollary}

  \label{cor:large_matching}
  Let $C>0$ be fixed. If Properties \textbf{B} and \textbf{C}
  simultaneously hold for a graph $G$ and $n$ is sufficiently large,
 then there exists a graph $H'$ having at least $Cn$ edges from each
  $E_i\setminus E^*$, $i=1,\ldots, 5$, such that $\Delta(H')\le 320C/\eps$.

\end{corollary}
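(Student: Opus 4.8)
The plan is to deduce the corollary more or less mechanically, by applying the ``in particular'' part of \propref{prop:large_matching} once to each of the five graphs $E_i\setminus E^*$ and then superposing the resulting subgraphs. The first step is to absorb the exceptional pairs. By Property~\textbf{C} we have $|E^*|\le \eps p n^2/8$, while Property~\textbf{B} gives $|E_i|\ge \eps p n^2/4$ for $i=1,\dots,4$ and $|E_5|\ge \eps n^2/4$; hence $|E_i\setminus E^*|\ge \eps p n^2/8$ for $i\le 4$, and (using also $p\le \frac12$, so that $|E^*|\le \eps n^2/16$) $|E_5\setminus E^*|\ge \eps n^2/8$. Thus, after discarding $E^*$, each $E_i\setminus E^*$ is still a graph of the expected order of magnitude.

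Next I would control maximum degrees. For $i=1,\dots,4$ the pairs of $E_i$ are edges of $G$, so $\Delta(E_i\setminus E^*)\le \Delta(G)\le 2pn$ by Property~\textbf{B}; for $i=5$ I use only the trivial bound $\Delta(E_5\setminus E^*)<n$. Put $m:=64C/\eps$. For each $i$, if $\Delta(E_i\setminus E^*)\le m$ I simply take $H_i':=E_i\setminus E^*$, which already has maximum degree at most $m$ and, since $pn\to\infty$ for $n$ large, at least $Cn$ edges. Otherwise $m<\Delta(E_i\setminus E^*)$, and \propref{prop:large_matching} applied to $H:=E_i\setminus E^*$ yields a subgraph $H_i'\subseteq E_i\setminus E^*$ with $\Delta(H_i')\le m$ and
\[
  e(H_i')\ \ge\ \frac{m}{4\,\Delta(E_i\setminus E^*)}\,|E_i\setminus E^*|
  \ \ge\
  \begin{cases}
    \dfrac{m}{8pn}\cdot\dfrac{\eps p n^2}{8}=Cn & \text{if } i\le 4,\\
    \dfrac{m}{4n}\cdot\dfrac{\eps n^2}{8}=2Cn & \text{if } i=5.
  \end{cases}
\]
The reason a single constant $m$ suffices across all five sets is that the factor $p$ by which the edge count of $E_i$ (for $i\le 4$) falls short of that of $E_5$ is exactly compensated by the factor $p$ in the degree bound $\Delta(G)\le 2pn$, so the ratio $e(E_i\setminus E^*)/\Delta(E_i\setminus E^*)$ is $\Theta(n)$ for every $i$.

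Finally, let $H':=\bigcup_{i=1}^{5}H_i'$. Since $H_i'\subseteq E_i\setminus E^*$, the graph $H'$ contains at least $e(H_i')\ge Cn$ edges of $E_i\setminus E^*$ for each $i$; and since the $E_i$ (hence the $H_i'$) are pairwise disjoint, $\Delta(H')\le \sum_{i=1}^{5}\Delta(H_i')\le 5m=320C/\eps$, which is exactly the assertion of the corollary. I do not expect any genuine obstacle here: the only thing that needs care is the arithmetic of constants — checking that the one fixed choice $m=64C/\eps$ is simultaneously large enough that every $H_i'$ has at least $Cn$ edges and small enough that the five degree budgets add up to the claimed $320C/\eps$ — together with the harmless remark that ``$n$ sufficiently large'' places us in the regime $pn\to\infty$ (which holds since $\tfrac1{p(1-p)}=o(n^{1/2})$) used in the trivial case above.
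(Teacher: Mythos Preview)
Your proposal is correct and follows essentially the same route as the paper: remove $E^*$, bound $e(E_i\setminus E^*)/\Delta(E_i\setminus E^*)=\Omega(n)$ uniformly in $i$ (using $\Delta(G)\le 2pn$ for $i\le 4$ and the trivial bound for $i=5$), apply \propref{prop:large_matching} with $m=64C/\eps$, and take the union. Your extra handling of the degenerate case $\Delta(E_i\setminus E^*)\le m$ is a harmless refinement — the paper simply asserts $m<\min_i \Delta(H_i)$, which holds for large $n$ since the average degree of each $H_i$ tends to infinity — and your disjointness remark, while true, is not actually needed for $\Delta(H')\le 5m$.
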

\begin{proof}
Because of Property \textbf{C}, we have $|E^*\cap E(G)| \le \eps p n^2/8$ and
$|E^{*}| \le \eps n^2/8$, which, together with Property
\textbf{B}, implies that $|E_i\setminus E^*|
\ge \eps p n^2/8$ for $i=1,\ldots, 4$, and $|E_5\setminus E^*| \ge
\eps n^2/8$. Let $H_i$ be the graph on $V(G)$ having edge set
$E_i \setminus E^*$. We have $\Delta(H_i) \le \Delta(G) \le 2np$ for
$i=1,\ldots,4$ and $\Delta(H_5) \le n$. Hence $\frac{e(H_i)}{\Delta(H_i)} \ge
\frac{\eps n}{16}$ for all $i=1,\ldots, 5$. By \propref{prop:large_matching}
applied with $m=64 C / \eps < \min\{\Delta(H_i) : i=1,\ldots, 5\}$, 
each $H_i$ contains a subgraph $H_i'$ having at least $\frac{m}4\cdot
\frac{e(H_i)}{\Delta(H_i)}\ge C n$ edges
such that $\Delta(H_i')\le m$. Let $H' = \bigcup_{i=1}^5 H_i'$.
Clearly $\Delta(H') \le 5 m = 320 C/\eps$ and $H'$ contains at least
$Cn$ edges from each $E_i\setminus E^*$, thereby proving the corollary.
\end{proof}

\begin{proof}[Proof of the upper bound in \thmref{thm:main}]
Given $p\in (0,1/2]$ and $k\ge 3$, choose small $\eps>0$ and then sufficiently
large $C$. Let $n\to\infty$. By the union bound, $G\sim \G np $ satisfies
Properties~\textbf{A}, \textbf{B} and \textbf{C} with probability at least
$0.4$. Hence there exists a graph $G$ on $n$ vertices satisfying the three
properties simultaneously. Fix such $G$.

From Corollary~\ref{cor:large_matching}, there exists a graph $H'$ having
at least $Cn$ edges from each $E_i\setminus E^*$, such that
$\Delta:=\Delta(H') \le 320C/\eps$. Let $E' =E(H')$.

In what follows, we change $E(G)$ on pairs, all of which will belong
to $E'$. Note that at any intermediate step, the effect of (for instance)
removing an edge $ij\in E'\cap E_1$ from $E(G)$ on $S(P_2,G)$ and $S(K_3,G)$
is not quite given by the initial values of $Y_1(ij)$ and $Y_2(ij)$, since
certain edges $iw$, $jw$ might have been changed.
But $E'$ was defined in such a way that there are most $2\Delta=o(\sqrt{pn})$
changed edges which affect either $Y_1$ or $Y_2$. So, the removal of
$ij\in E_1\setminus E^*$ from $E(G)$ at any intermediate stage, still decreases
$S(P_2,G)$ by an amount between $\sqrt{pn}-2\Delta$ and $4k
\eps^{-1/2}|\mc{F}_k|^{1/2}\sqrt{pn}+2\Delta<\eps^{-1} \sqrt{pn}$. 
Similarly, because $\Delta = o(p\sqrt{n})$, the same operation decreases
$S(K_3, G)$ by an amount between $p\sqrt{n} - 2\Delta$ and
$4k\eps^{-1/2}|\mc{F}_k|^{1/2} p\sqrt{n} + 2\Delta < \eps^{-1} p\sqrt{n}$.

By Property \textbf{A}, we know that 
 $$|S(K_2,G)|\ \le\  5|\mc{F}_k|^{1/2}p^{1/2}n\ =:\ \tau.
 $$ 
 If $S(K_2,G) \ge 1$, we can pick an $e\in E'
\setminus E_5$ and remove it from $G$. This has the effect of
reducing $S(K_2,G)$ by $1$. If $S(K_2,G) \le -1$, then we can pick an
$e\in E'\cap E_5$ and add it to~$G$. This new edge increases the
value of $S(K_2,G)$ by $1$. Iterate this process at most
 $\tau$ times to obtain a graph $G$ such that
$|S(K_2,G)| = D(n,p)$, always using a different edge $e$. This is
possible because there are at least $C n$ edges from $E'\cap E_i$,
for each $i$.

Since we have flipped at most $\tau$ edges, all belonging to $H'$,
and each flip changes $S(K_3,G)$ (reps.\ $S(P_2,G)$) by at most $\eps^{-1}p\sqrt{n}$ (resp.\
$\eps^{-1}\sqrt{pn}$) in absolute value,
the current graph satisfies $|S(K_3,G)| \le p S_0$ and $|S(P_2,G)| \le p^{1/2} S_0$,
where
\[
  S_0=5|\mc{F}_k|^{1/2}p^{1/2}n^{3/2}+\tau \cdot
  \eps^{-1}\sqrt n.
\]

Our next goal is to make both $|S(K_3,G)|$ and $|S(P_2,G)|$ small
without changing $S(K_2,G)$.  We repeat the following step $Cp^{1/2}n-\tau$ times.
Consider the current graph $G$. There are four cases depending on whether
each of $S(K_3,G)$ and $S(P_2,G)$ is positive or not. First suppose that
they are both positive. Pick previously unused edges $e\in E' \cap E_1$ and $e'\in E'\cap E_5$, and replace $e$ with $e'$ in $G$. This operation preserves
the value of $S(K_2,G)$, and has the effect of reducing both $S(K_3,G)$ and
$S(P_2,G)$. It reduces $S(K_3,G)$ by between $(1-0.1)p\sqrt{n}-4\Delta\ge 0.8p\sqrt{n}$ and $2\eps^{-1}p
\sqrt n<pn$. Thus if (initially) $S(K_3,G)\ge pn$, then this value
is lowered by at least $0.8p\sqrt{n}$. Regarding $S(P_2,G)$, the operation
reduces it by between $0.8\sqrt{pn}$ and $2\eps^{-1}\sqrt{pn}<pn$.
Likewise, if $S(K_3,G)<0$ and $S(P_2,G)>0$, we replace an
$e\in E'\cap E_2$ by an $e'\in E'\cap E_5$, and similarly
in the other two cases. We iterate this process, always using 
edges $e$ and $e'$ that have not been used before. This is possible since
$E'$ contains at least $C n$ edges from each $E_i$.  Also, once one of
$|S(K_3,G)|$ or $|S(P_2,G)|$ becomes less than $pn$, it stays so for the rest
of the process. Since $(Cp^{1/2} n-\tau)\cdot 0.8\sqrt{n}> S_0$, we have
that $\max\{|S(K_3,G)|, |S(P_2,G)|\}<pn$ at the end.

The iterative process might change the value of $S(F,G)$ for $F\in \mc{F}_k$
with at least $4$ vertices. Take any such $F$ and let $f=v(F)$. Initially, $|S(F,G)|$ was at most $5|\mc{F}_k|^{1/2}p^{e(F)/2}n^{f/2}$ by Property~\textbf{A}. If we add to it $Cp^{1/2}n$, an upper bound on the number of the changed edges, multiplied by 
$4k\eps^{-1/2}|\mc{F}_k|^{1/2}p^{(e(F) - 1)/2}n^{f/2-1}$, then this accounts for every copy of $F$ inside
the vertex set $V(G)$ except perhaps those that contain at least two of the changed edges. (This estimate used the fact that none of the changed edges is in $E^*$.) A pair of two disjoint changed edges is trivially in at most $f^4 n^{f-4}$ copies of $F$. It remains to consider the case when $xy$ and $xz$ are two changed intersecting edges. Note that there are at most $Cp^{1/2}n\cdot 2\Delta$ choices of $(xy,xz)$. Consider a copy $F'$ of $F$ with vertex set $X\supseteq \{x,y,z\}$. If none of the pairs $e\subseteq X$ with $e\not\subseteq\{x,y,z\}$ is an element of $E(G)$ or a changed edge, then this $F'$ contributes at most $p$ in absolute value to the sum in~\eqref{eqn:signed_sum} that defines $S(F,G)$. (Indeed, as $F$ has at least 4 non-isolated vertices, at least one edge of $F'$ has to intersect $X\setminus\{x,y,z\}$; thus the $F'$-term in~\eqref{eqn:signed_sum} contains at least one factor~$-p$.) Otherwise, $X$ has to contain a changed edge or an edge from $E(G)$ that is not inside $\{x,y,z\}$. The number of such subgraphs 
for any given triple $\{x,y,z\}$ can be bounded by
 $$
 3(\Delta+2pn)f^4 n^{f-4} + ( Cp^{1/2}n + pn^2) f^5 n^{f-5}\le 2f^5pn^{f-3}.
 $$
 Putting all together we obtain that, at the end of the process,
 \begin{eqnarray*}
  |S(F,G)| &\le& 5|\mc{F}_k|^{1/2}p^{e(F)/2}n^{f/2}+Cp^{1/2}n \cdot
  4k\eps^{-1/2}|\mc{F}_k|^{1/2}p^{(e(F) - 1)/2}n^{f/2-1} \\
 &+& (Cp^{1/2}n)^2f^4n^{f-4}+Cp^{1/2}n\cdot 2\Delta\cdot (p\cdot f^3n^{f-3}+2f^5pn^{f-3}).
\end{eqnarray*}
\hide{
This is $O(p n^{v(F)-2})$ if $e(F) \ge 3$. In the remaining 
case $e(F) \le 2$ and $v(F)\ge 4$, we have only one graph, namely $F=2 K_2$ (two parallel
edges). It satisfies
\[
  S(K_2,G)^2 = p(1-p)\binom{n}{2} + (1-2p)S(K_2, G) + 2 S(P_2, G)+
  2S(2 K_2,G),
\]
therefore $|S(2K_2, G)| = O(pn^2)$.}%
This is $ O(pn^{f-2})$ since $F$ has $f\ge 4$ vertices and $e(F)\ge2$ edges.

We conclude that the final graph $G$ satisfies $S(F,G) = O(p n^{v(F)-2})$
for all $F\in \mc{F}_k\setminus \{K_2\}$ and $S(K_2,G)=O(D(n,p))$. That is, we satisfied~\eqref{eqn:small_signed}, which implies the required upper bound on $u_k(G,p)$.
\end{proof}

\section{Shatten norms and other related norms}
\label{sec:shatten}

Note that the graphs in this section are allowed to have loops. 
When we define
the complement $\Ov{G}$ of a graph $G$, loopless
vertices are mapped to loops and vice versa. 
For a graph $G$ on $[n]$ and a function $p=p(n)$, let $M=A-pJ$ denote the shifted adjacency matrix of $G$, that is,
\begin{equation}
\label{eqn:shifted_adj}
M_{ij}=\lt\{\begin{array}{ll} 1-p,& \text{if } ij\in E(G),\\
-p,& \text{otherwise},\end{array}\rt.\qquad 1\le i,j\le n.
\end{equation}
In order to make some forthcoming formulas shorter, we define $\e(G):=\sum_{i=1}^n \sum_{j=1}^n A_{ij}$. In other words, $\e(G)$ is the number of loops plus twice the number of non-loop edges in $G$. For example, $\e(G)+\e(\Ov{G})=n^2$.

Let us prove Theorem~\ref{th:shatten}

\begin{proof}[Proof of Theorem~\ref{th:shatten}{}]

Let $s=2k$  and let $G$ be a graph (possibly with loops) on $[n]$, where $n\to\infty$.
Without loss of
generality we may assume that $p \le \frac12$. This is because
$\shatten pG{s}^{s} = \shatten {(1-p)}{\Ov{G}}{s}^{s}$ and the
expression in the statement we have to prove is symmetric
with respect to $p$ and $1-p$.

The matrix $M$ in~\eqref{eqn:shifted_adj} is a symmetric real matrix so it has real eigenvalues 
$\lambda_1\ge\dots\ge\lambda_n$. For an even integer $s\ge 4$, we have 
\[
  n^s\,\shatten pG{s}^{s}=\sum_{i=1}^n \lambda_i^s=\textup{tr}(M^s)=\sum_{i=1}^n (M^s)_{ii},
\]
 where $\textup{tr}$ denotes the trace of a matrix.

From now on we split the analysis of the lower bound for
$\shatten pG{s}^{s}$ into two cases. 

In the first case, we assume that
$\e(G) \ge \frac p2\,n^2$. This (together with $p\le \frac12$)
implies that
\begin{equation}\label{eq:SumMijSquare}
\sum_{i=1}^n \lambda_i^2 =\sum_{i,j=1}^n M_{ij}^2=(1-p)^2\e(G)+p^2\e(\Ov{G})\ge
 \left((1-p)^2\,\frac p2+p^2\left(1-\frac p2\right)\right) \, n^{2}
=\frac p2\,n^2.
 \end{equation}
By the inequality between the arithmetic and $k$-th power means for
$k\ge 2$ applied to non-negative numbers $\lambda_1^2,\dots,\lambda_n^2$
(or just by the convexity of $x\mapsto x^k$ for $x\ge 0$), we conclude that
\[
  \lt(\frac{\lambda_1^{2k}+\dots+\lambda_n^{2k}}n\rt)^{1/k}\, \ge\,
  \frac{\lambda_1^2+\dots+\lambda_n^2}{n}\, \ge\, \frac{pn}2.
\]
Thus $n^{2k}\shatten Gp{2k}^{2k}=\sum_{i=1}^n\lambda_i^{2k}=
\Omega(p^k n^{k+1})$, giving the required lower bound in
the first case.

In the second case, we assume that $\e(G) < \frac p2\, n^2$. Since
$\lambda_n$ is the smallest eigenvalue of $M$, we have
$\lambda_n = \min \{\langle Mv, v\rangle : \|v\|_2 = 1\}$.
So if we choose $v=\lt(\frac{1}{\sqrt{n}},\ldots, \frac{1}{\sqrt{n}}\rt)
\in \R^n$, we obtain
\begin{equation}\label{eq:SumMij}
 \lambda_n \le \langle Mv, v\rangle = \frac{(1-p)\e(G) - p\e(\Ov{G})}{n}
  \le \left((1-p)\frac p2-p(1-\frac p2)\right)n= -\frac{pn}2.
\end{equation}
This implies that $\sum_{i=1}^n\lambda_i^{2k}\ge \lambda_n^{2k}
=\Omega(p^{2k} n^{2k})$, thereby proving the lower bound
in the second case.

On the other hand, for the upper bound we have two constructions.
Again we assume that $p \le \frac12$. The first construction is very
simple: the empty graph. If $G$ is empty, a straightforward computation
shows that $\shatten pG{2k} = p$,
and this proves the upper bound whenever $p \le  n^{-(k-1)/k}$.
For the second construction, we consider $G\sim \I G_{n,p}^\text{loop}$
to be a random graph with loops, where every possible pair or loop belongs to
$E(G)$ independently with probability $p$. Here we assume that $p>n^{-(k-1)/k}$.
Let $X=n^{2k}\shatten pG{2k}^{2k}$. By~\eqref{eq:shatten}, we have $X=\sum_{f:\res{2k}\to V(G)} X_f$,
where $X_f=\prod_{i\in \res{2k}} M_{f(i),f(i+1)}$ and $M=A-pJ$ is as
before. Then the expectation of $X_f$ is $0$ unless for every $i$
there is $j\not=i$ with $\{f(j),f(j+1)\}=\{f(i),f(i+1)\}$,
that is, every edge of $C_{2k}$ is glued with some other edge.
If $f$ is a map with $\E[X_f]\not=0$ then the image
under $f$ of the edge set of $C_{2k}$ is a connected multi-graph
where every edge (or loop) appears with even multiplicity,
so it contains at most $k+1$ vertices. Since the number of maps $f$
for which the image of $C_{2k}$ contains at most $e$
distinct edges (ignoring multiplicity) is $O(n^{e+1})$, we have
\[
  \E[X] = O\lt(\sum_{e=1}^{k} n^{e+1}p^e\rt) =
  O(n^{k+1}p^{k}),
\]
since $p > n^{-1}$. Now take an outcome $G$ such that the value of $X$
is at most its expected value. This finishes the proof of the theorem.
\end{proof}

A related result of Hatami~\cite{Hatami10} shows that a complete bipartite
graph $F=K_{2k,2m}$, with even part sizes $2k$ and $2m$, also gives
a norm by a version of \eqref{eq:shatten}. If $G$ is a graph on $[n]$, then this norm, for $G-p$, is
 $$
  \|G-p\|_F:=t(F,M)^{1/(2k+2m)}=n^{-1} X^{1/(2k+2m)},
  $$
   where $M$ is as in
\eqref{eqn:shifted_adj},
\[
 X:=\sum_{f:A\cup B\to V(G)}\ \prod_{a\in A}\ \prod_{b\in B}
M_{f(a),f(b)},
\]
and $A, B$ are fixed disjoint sets of sizes $2k$ and $2m$ respectively.

\begin{theorem}\label{th:2k2m}
Let $F=K_{2k,2m}$ with $1\le k\le m$. The minimum of $\|G-p\|_F$ over
$n$-vertex graphs $G$ with loops allowed is
\[
  \Theta\lt(\min\lt\{p^{4km}(1-p)^{4km},\, p^{2km}(1-p)^{2km}n^{-k}
  \rt\}^{1/(2m+2k)} \rt).
\]
\end{theorem}

\begin{proof}
For the same reasons stated in the beginning of the proof of
Theorem~\ref{th:shatten} we may assume, without loss of generality,
that $p \le \frac12$. We begin with the lower bound. We
rewrite $X$ by grouping all maps $f:A\cup B\to V(G)$ by the
restriction of $f$ to $A$. For every fixed $h:A\to V(G)$, we have
\[
  \sum_{g:B\to V(G)} \ \prod_{a\in A} \ \prod_{b\in B} M_{h(a),g(b)} = 
  \lt(\sum_{u\in V(G)}\  \prod_{a\in A}M_{h(a),u}\rt)^{2m}\ge 0.
\]
As in the proof of Theorem~\ref{th:shatten}, we divide the analysis into
two cases. 

In the first case, we assume that $\e(G)\ge \frac p2\, n^2$.
Let $\mc H$ be the set of all $h:A\to V(G)$ such that $h(2i-1)=h(2i)$
for all $i\in[k]$, where we assumed that $A=[2k]$. Note that $|\mc H| = n^k$.
If $h\in \mc H$ we have
\[
 \sum_{u\in V(G)}\ \prod_{a\in A} M_{h(a),u}=
 \sum_{u\in V(G)} \ \prod_{i\in [k]} M_{h(2i),u}^2.
\]
Thus by the convexity of $x\mapsto x^{2m}$ for $x\in \R$,
the convexity of $x\mapsto x^k$ for $x\ge 0$, and the calculation in~\eqref{eq:SumMijSquare}, we have that
\begin{align*}
X &= \sum_{h:A\to V(G)}
  \lt(\sum_{u\in V(G)} \prod_{a\in A} M_{h(a),u}\rt)^{2m}
  \ge \sum_{h\in \mc H}
  \lt(\sum_{u\in V(G)} \prod_{i\in [k]} M_{h(2i),u}^2\rt)^{2m}\\
  &\ge n^k\lt(\frac{1}{n^k}\sum_{h\in \mc H}\sum_{u\in V(G)}
   \prod_{i\in [k]} M_{h(2i),u}^2\rt)^{2m}=
  n^k\lt(\frac{1}{n^k}\sum_{u\in V(G)}
     \lt[\sum_{v\in V(G)} M_{v,u}^2\rt]^k\rt)^{2m} \\
  &\ge n^k\lt(\frac{1}{n^{k-1}} \lt[\frac{1}{n}
    \sum_{u\in V(G)} \sum_{v\in V(G)} M_{v,u}^2\rt]^k\rt)^{2m}
   \ge
    n^k\lt(\frac{1}{n^{k-1}} \lt[\frac{(1-p)^2\e(G)+p^2\e(\Ov{G})}{n}
       \rt]^k\rt)^{2m} \\
  &\ge n^k\lt(\frac{1}{n^{k-1}} \lt[\frac{pn}{2}
         \rt]^k\rt)^{2m}
         = \Omega\lt(p^{2km}n^{k+2m}\rt),
\end{align*}
which proves the lower bound in the first case.

In the second case, we assume that $\e(G)< \frac p2\, n^2$.
By the convexity of $x\mapsto x^{2m}$ and $x\mapsto x^{2k}$
for all $x\in \R$ and by the calculation in~\eqref{eq:SumMij}, we have that
\begin{align*}
X &= \sum_{h:A\to V(G)}
  \lt(\sum_{u\in V(G)} \prod_{a\in A} M_{h(a),u}\rt)^{2m}
  \ge n^{2k}\lt(\frac{1}{n^{2k}}\sum_{h:A\to V(G)}\sum_{u\in V(G)}
   \prod_{i\in [2k]} M_{h(i),u}\rt)^{2m}\\
  &=n^{2k}\lt(\frac{1}{n^{2k}}\sum_{u\in V(G)}
     \lt[\sum_{v\in V(G)} M_{v,u}\rt]^{2k}\rt)^{2m}
  \ge n^{2k}\lt(\frac{1}{n^{2k-1}} \lt[\frac{1}{n}
    \sum_{u\in V(G)} \sum_{v\in V(G)} M_{v,u}\rt]^{2k}\rt)^{2m}\\
   &= n^{2k}\lt(\frac{1}{n^{2k-1}} \lt[\frac{(1-p)\e(G)-p\e(\Ov{G})}{n}
       \rt]^{2k}\rt)^{2m}
  = \Omega\lt(p^{4km}n^{2k+2m}\rt),
\end{align*}
which proves the lower bound in the second case.

We turn to the upper bound. We need two constructions. The first
one is again the empty graph. If $G$ is empty then
\[
  \|G-p\|_F = p^{2km/(k+m)},
\]
and this proves the upper bound whenever $p \le n^{-1/(2m)}$.
The second construction is the random graph $G\sim\G np^\text{loop}$. Write
$X$ as the sum of $X_f$ over $f:A\cup B\to V(G)$. Each $f$ with $\E[X_f]\not=0$
maps $E(K_{2k,2m})$ into a connected multi-graph where every edge appears with
even multiplicity.
Consider the equivalence relation on $A\cup B$ given by one such $f$, where
two vertices in $A\cup B$ are equivalent if their images under $f$ coincide.
If non-trivial classes (i.e.,\ those containing more than one vertex) miss some $a\in A$ and some $b\in B$, then $\{f(a),f(b)\}$
is a singly-covered edge, a contradiction. Thus, non-trivial classes have to
cover at least one of $A$ or $B$ entirely, so the number of identifications is
at least $\min\{|A|,|B|\}/2=k$. It follows that the image of $F$ under $f$ has
at most $k+2m$ vertices. In fact, if the image of $F$ under $f$ contains
exactly $2k + 2m -t$ vertices (where $t\ge k$), the number of distinct edges
in the image of $F$ by $f$ is at least $4km - 2mt$. This is because every
``identification'' of vertices under the same equivalence class of $f$ can
``destroy'' at most $2m$ edges. Therefore
\[
  \E[X] = O\lt(\sum_{t=k}^{2k+2m-1} n^{2k+2m-t}p^{4km-2mt}\rt) = O(n^{k+2m}p^{2km}),
\]
since $p > n^{-1/(2m)}$. Now take an outcome $G$ such that the value of $X$
is at most its expected value. This finishes the proof of the theorem.
\end{proof}

\section{Concluding remarks and open questions}
\label{sec:concluding_remarks}

Observe that the result
of Chung, Graham, Wilson~\cite{ChungGrahamWilson89} implies that there cannot
be a graph $G$ with $t(K_2,A)=p$ and $t(C_4,A)=p^4$ where $0<p<1$
and $A$ is the adjacency matrix of $G$.
(Indeed, otherwise the uniform blow-ups of $G$ would form a quasirandom
sequence, which is a contradiction.) This argument does not work with
the subgraph count function $N(F,G)$. We do not know if the fact that $u_k(n,p)$ can be zero infinitely often for
$k=3$ (when $p$ is rational) but not for $k=4$ can directly be related to the fact
that quasirandomness is forced by $4$-vertex densities.

Let $\G nm$ be the random graph on $[n]$ with $m$ edges, where all
$\binom{\binom{n}{2}}{m}$ outcomes are equally likely. 
Janson~\cite{Janson94} completely classified the cases when the random
variable $N(F,\G nm)$ satisfies the Central Limit Theorem where
$n\to\infty$ and $m=\lfloor p{n\choose 2}\rfloor$. He showed that
the exceptional $F$ are precisely those graphs for
which $S^{(p)}(H,F)=0$ for every $H$ from the following set: connected
graphs with $5$ vertices and graphs without isolated vertices with $3$ or
$4$ vertices. It is an open question if at least one such pair $(F,p)$ with
$p\not=0,1$ exists, see, e.g., \cite[Page 65]{Janson94} and
\cite[Page 350]{Janson95}. Note that nothing is stipulated about $S^{(p)}(K_2,F)$.
In fact, it has to be non-zero e.g.\ by \thmref{thm:main}; moreover,
\cite[Theorem~4]{Janson94} shows that, for given $v(F)$ and $p$, the number of edges in such 
hypothetical $F$ is uniquely determined. \hide{
Using computer, K\"arrman~\cite{Karrman94} constructed a 64-vertex graph $F$
with $S^{(1/2)}(H,F)=0$ for $H=K_2$, $3$- and $4$-path, and $4$-cycle. As far
as we know, it is open if there are any further such examples $F$. 
}%
This indicates that the problem of understanding possible joint
behaviour of the $S$-statistics is difficult already for very small graphs. 

It would be interesting to extend \thmref{thm:main} to a wider range of $p$,
or to other structures such as, for example, $r$-uniform hypergraphs with respect to different notions of quasirandomness (see~\cite{ConlonHanPersonSchacht12,LenzMubayi15,Towsner17}). 
\hide{Unfortunately,
we could not determine the order of magnitude of the corresponding function
$u_k$ for these structures.}

\section*{Acknowledgements}

We thank the anonymous referees for the careful reading of the manuscript and helpful comments.

\section*{References}

\bibliography{final}

\end{document}